\documentclass[12pt]{amsart}
\usepackage[margin=1.1in]{geometry}

\newtheorem{theorem}{Theorem}[section]
\newtheorem{lemma}[theorem]{Lemma}

\newtheorem{proposition}[theorem]{Proposition}
\newtheorem{remark0}[theorem]{Remark}
\newtheorem{example0}[theorem]{Example}
\newtheorem{definition}[theorem]{Definition}

\newenvironment{example}{\begin{example0}\rm}{\end{example0}}
\newenvironment{remark}{\begin{remark0}\rm}{\end{remark0}}

\newcommand{\propref}[1]{Proposition~\ref{#1}}
\newcommand{\thmref}[1]{Theorem~\ref{#1}}
\newcommand{\lemref}[1]{Lemma~\ref{#1}}

\def\max{{\mathfrak m}}                   
\newcommand{\m}{\mathfrak m}
\def\maxn{{\mathfrak n}}                   

\def\md{{{\bf 1}_d}}
\def\mr{{{\bf 1}_r}}

\def\uz{{\underline z}}

\def\HF{{\operatorname{H\!F}}}

\def\HS{\operatorname{H\!S}}

\def\reg{\operatorname{reg}}
\def\deg{\operatorname{deg}}
\def\dim{\operatorname{dim}}
\def\ann{\operatorname{Ann}}

\def\img{\operatorname{Im}}
\def\ker{\operatorname{Ker}}
\def\length{\operatorname{Length}}

\begin{document}
\title[The structure of the inverse system  of  Gorenstein k-algebras]{{\bf
The structure of the inverse system  of  Gorenstein  $k$-algebras}}

\author[J. Elias]{J. Elias ${}^{*}$}
\thanks{${}^{*}$
Partially supported by MTM2016-78881-P, Ministerio de Econom'a y Competitividad, Spain. }
\address{Juan Elias
\newline \indent Departament d'\`{A}lgebra i Geometria
\newline \indent Facultat de Matem\`{a}tiques
\newline \indent Universitat de Barcelona
\newline \indent Gran Via 585, 08007
Barcelona, Spain}  \email{{\tt elias@ub.edu}}

\author[M. E. Rossi]{M. E. Rossi ${}^{**}$}
\thanks{${}^{**}$
Partially supported by PRIN 2015EYPTSB-008 Geometry of Algebraic Varieties.  \\
\rm \indent 2010 MSC:  Primary
13H10; Secondary 13H15; 14C05}

\address{Maria  Evelina  Rossi
\newline \indent Dipartimento di Matematica
\newline \indent Universit\`{a} di genova
\newline \indent Via Dodecaneso 35, 16146 Genova, Italy}
\email{{\tt rossim@dima.unige.it}}

\begin{abstract}
 Macaulay's Inverse System gives an effective method to construct  Artinian Gorenstein $k$-algebras.   
To date  a general  structure for  Gorenstein   $k$-algebras   of any dimension (and codimension)  is not understood. In this paper we extend Macaulay's correspondence  characterizing  the submodules of the divided power ring  in one-to-one correspondence with Gorenstein d-dimensional $k$-algebras.  We discuss effective methods for constructing Gorenstein graded rings. Several examples illustrating our results are given.

\end{abstract}

\maketitle

\centerline{\small \it In honor of Giuseppe Valla our teacher and friend} \vskip 0.7cm

\section{Introduction}

Gorenstein rings were introduced by Grothendieck, who named them because of their relation to a duality property of singular plane curves studied by Gorenstein, \cite{Gor52}, \cite{Gro55}.
The zero-dimensional case had previously been  studied by  Macaulay, \cite{mac16}.
Gorenstein rings are very common and significant in many areas of mathematics, as it can be seen in Bass's paper  \cite{Bas63}, see also \cite{Hun99}.
They have appeared as an important component in a significant number of problems and have proven useful in a wide variety of applications
in commutative algebra,  singularity theory, number theory  and more recently in combinatorics, among other areas.

\smallskip
Gorenstein rings are a generalization of complete intersections, and indeed the two notions coincide in codimension two.
Codimension three Gorenstein rings are completely described   by  Buchsbaum and Eisenbud's structure theorem, \cite{BE77}.
More recently Reid in \cite{Rei13}  studied the
projective resolution of Gorenstein ideals of codimension $4$, aiming to extend the result of Buchsbaum and Eisenbud.
Kustin and Miller in a series of papers studied the structure of Gorenstein ideal of higher codimension, see \cite{KM83} and the references therein.

Notice that the lack of a general structure of homogeneous  Gorenstein ideals of higher codimension   is the
main obstacle to extending the Gorenstein liaison theory in codimension at least three;
the codimension two Gorenstein liaision case is well understood,  see \cite{KMMNP01}.
See, for instance,
\cite{MP97}, \cite{KM83} and \cite{IS05} for some constructions of particular families of  Gorenstein algebras.

\bigskip
Let $k$ be a   field  and let $I$ be an ideal (not necessarily homogeneous) of the power series ring $R$  (or of the polynomial ring in the homogeneous case).
As an effective consequence of Matlis duality,   it is known that an Artinian ring $R/I $ is a Gorenstein $k$-algebra if and only if $I$ is the ideal of a system of polynomial differential operators with constant coefficients having a unique solution.
This solution determines  an $R$-submodule of the divided power   ring $\Gamma $ (or its completion) denoted by $I^{\perp} $ and called the inverse system of $I$ which contains the same information as in the original ideal.
Macaulay at the beginning of the 20th century  proved that the Artinian Gorenstein $k$-algebras are in correspondence with the cyclic $R$-submodules  of   $\Gamma $ where the elements of $R$ act as derivatives on $\Gamma$, see \cite{Ems78}, \cite{IK99}.
In the last twenty years several authors have applied this device to several problems, among others: Warings's problem,  \cite{Ger96},
n-factorial conjecture in combinatorics and geometry, \cite{Hai94},
the cactus rank, \cite{RS13},  the geometry of the punctual Hilbert scheme of Gorenstein schemes, \cite{IK99}, Kaplansky-Serre's  problem, \cite{RS}, classification up to analytic isomorphism of Artinian Gorenstein rings, \cite{ER12}.

\smallskip
The aim of this paper is 
to extend the well-known Macaulay's correspondence
 characterizing  the submodules of $\Gamma $  in one-to-one correspondence with Gorenstein d-dimensional $k$-algebras 
 (Theorem \ref{bijecGor}). These submodules are called $G_d$-admissible (Definition \ref{G-modules}) and 
in positive dimension they are not finitely generated. The $G_d$-admissible submodules of $\Gamma$ can be described in some coherent manner and we discuss effective methods for constructing Gorenstein $k$-algebras    with a particular emphasis to standard graded $k$-algebras (Theorem \ref{gradedGor}). In Section 4 several examples are given, in particular we propose a finite  procedure for constructing  Gorenstein graded $k$-algebras of given multiplicity or given Castelnuovo-Mumford regularity (Proposition \ref{finite}).  We discuss possible obstructions in the local case corresponding to non-algebraic curves. 
Our hope is that our results will be successfully applied to give new insights in the above mentioned applications and problems.

\medskip
The computations are performed in characteristic zero ($k= \mathbb Q$) by using the computer program system Singular, \cite{DGPS}, and the library \cite{E-InvSyst14}.

\section{Inverse System}

Let $V$ be a vector space of dimension $n$ over a field $k$ where, unless specifically stated otherwise,  $k $ is  a field of any characteristic.  Let $R= Sym_{\cdot}^k V = \oplus_{i \ge 0} Sym_i^k V $ be  the  standard graded polynomial ring in $n$ variables over $k$ and $\Gamma = D_{\bf \cdot}^k(V^*) = \oplus_{i\ge 0} D_i^k(V^*) = \oplus_{i\ge 0}  {\rm Hom}_k (R_i, k) $    be the graded $R$-module of graded $k$-linear homomorphisms from $R$ to $k.$ Through the paper  if  $V $ denotes the $k$-vector space $\langle z_1, \dots, z_n \rangle,  $ then we denote  by   $V^* = \langle Z_1, \dots, Z_n \rangle$   the  dual base and  $\Gamma = \Gamma(V^*) \simeq  k_{DP} [Z_1, \dots, Z_n] $ the divided power ring.   In particular   $\Gamma_j = \langle \{Z^{[L]} \  / \  |L|=j \} \rangle $ is the span of the dual generators to $z^{L}=z_1^{l_1}\cdots z^{l_n}$ where $L$ denotes the multi-index $L=(l_1,\dots, l_n)\in \mathbb N^n $ of length $|L|= \sum_i  l_i.$
If $L\in \mathbb Z^n$ then we set $ X^{[L]}=0$ if any component of $L$ is negative.   The monomials  $ Z^{[L]} $ are called divided power monomials (DP-monomials) and the elements $F= \sum_L b_L Z^{[L]} $ of $\Gamma $ the divided power polynomials (DP-polynomials).  

\vskip 2mm
We extend the above setting to the local case considering $R$ as the power series ring on $V. $  If $V $ denotes the $k$-vector space $\langle z_1, \dots, z_n \rangle, $ then $R=k[\![z_1,\dots, z_n]\!] $ will denote the formal power series ring  and   $\m=(z_1,\dots, z_n) $  denotes  the maximal ideal of $R. $    The injective hull $E_R(k) $  of $R$  is     isomorphic to the divided power ring (see \cite{Gab59}).  For detailed information   see  \cite{EisGTM}, \cite{Ems78}, \cite{IK99}, Appendix A. 

\vskip 2mm
 
We recall   that $\Gamma $ is a  $R$-module acting $R$ on  $\Gamma$ by {\it {contraction}}   as it follows.

\smallskip
\begin{definition} If $h =\sum_M a_M z^M \in R $ and $F= \sum_L b_L Z^{[L]} \in \Gamma, $ then the contraction of $F$ by $h$ is defined as
$$ h \circ F =  \sum_{M, L} a_M b_L Z^{[L-M]} $$
\end{definition}

The contraction is $Gl_n(k)$-equivariant.    If the characteristic of the field $k$ is zero, then there is a natural isomorphism of $R$-algebras between $(\Gamma , \circ) $ equipped with an internal product and the usual polynomial ring $P $  replacing the contraction with  the  partial derivatives.      In this paper we  do not consider the ring structure of $\Gamma, $ but  we  always consider  $\Gamma$ as $R$-module by  contraction and  $k$ will be a field of any characteristic.  

\medskip 
The contraction $\circ $ induces a exact pairing:
$$
\begin{array}{ cccc}
\langle\ , \ \rangle : & R  \times \Gamma  &\longrightarrow &   k  \\
                       &       (f , g) & \to  & ( f  \circ g ) (0)
\end{array}
$$
 
\medskip
If $I\subset R$ is an ideal of $R$ then  $(R/I)^\vee={\rm Hom}_R(R/I, \Gamma)$ is the $R$-submodule of $\Gamma$
$$
{I^{\perp}} =\{ g \in \Gamma \ |\  I \circ g = 0 \  \} =\{g\in \Gamma \ |\  \langle f, g \rangle = 0 \ \ \forall f  \in I \}.
$$
This submodule of $\Gamma $  is called {\it{Macaulay's inverse system of $I$.}}
If $I$ is a homogeneous ideal of a polynomial ring $R, $ then $I^{\perp}$ is homogenous (generated by forms in $\Gamma$ in the standard meaning) and
$ I^{\perp} = \oplus I_j ^{\perp} $ where $I_j ^{\perp} = \{ F \in \Gamma_j \mid  \ h \circ F=0 \ \ \mbox{ for \ all \ }  h \in I_j \}.$

\medskip

\noindent Given a $R$-submodule $W$ of $\Gamma, $ then the dual $W^\vee={\rm Hom}_R(W, \Gamma)$ is the ring $R/\ann_R(W)$ where
$$ \ann_R(W)= \{ f \in R \ \mid \ f \circ g= 0 \ \mbox{ for \ all \ } g \in W\}.
$$
Notice that $\ann_R (W) $ is an ideal of $R$.
Matlis duality assures that
$$
\ann_R(W)^{\perp}=W,  \;\; \ann_R(I^{\perp})=I.
$$
 
 If $W$ is generated by homogeneous DP-polynomials, then  $\ann_R(W) $ is a  homogeneous ideal of $R$.
  
Macaulay in \cite[IV]{mac16} proved a particular case of Matlis duality, called Macaulay's correspondence,
between the
  ideals $I\subseteq R  $ such that $R/I$ is an Artinian local ring and $R$-submodules  $W=I^{\perp}$ of $\Gamma $ which are finitely generated.
Macaulay's  correspondence is an effective method for computing Artinian rings, see  \cite{CI12}, Section 1, \cite{Iar94}, \cite{Ger96} and \cite{IK99}.

\medskip
If $(A, \maxn) $ is an Artinian local ring,  we denote by  $Soc(A) = 0 :_A \maxn$  the  socle of $A.$ Throughout this paper we denote by $s$ the {\it{socle degree}}  of $A$ (also called L\"{o}wey length),   that is the maximum integer $j$ such that $\maxn^j \neq 0.$ The  {\it{type}} of $A$ is $t(A) := \dim_{k} Soc(A)$;
 $A$ is an Artinian  Gorenstein ring if $t(A)=1$.
If $R/I$ is an  Artinian  local algebra   of  socle-degree  $s$ then $  {I^{\perp}} $ is generated by DP-polynomials  of degree $\le s  $  and $\dim_{k} (A)  (= \text{\ multiplicity\ of } A) = \dim_{k} I^{\perp}. $

\medskip
\noindent {\it{From Macaulay's  correspondence,   Artinian  Gorenstein $k$-algebras $A=R/I$ of socle degree $s$ correspond to cyclic $R$-submodules  of $\Gamma$ generated by a divided power polynomial $F\neq 0$ of degree $s$.}}

\medskip
We will denote by $\langle F \rangle_R $ the cyclic $R$-submodule of $\Gamma $ generated by the divided power polynomial $F.$ 

\medskip 
We can compute the Hilbert function of a graded or local $k$-algebra $A=R/I $ (not necessarily Artinian) in terms of its inverse system. The   Hilbert function of $A=R/I$ is by definition
$$
\HF_A(i) =   \dim_{k} \left(\frac{\maxn^i}{\maxn^{i+1}}\right)
$$
where $\maxn= \m/I $ is the maximal ideal of $A.$

We denote by $\Gamma_{\le i}$ (resp. $\Gamma_{< i}$, resp. $\Gamma_i$), $i\in \mathbb N$,  the $k$-vector space of DP-polynomials of $\Gamma$ of degree less or equal (resp. less, resp. equal ) to $i$, and we consider the following $k$-vector space
$$
 (I^{\perp})_i := {\frac{ I^{\perp} \cap \Gamma_{\le i} +  \Gamma_{< i}}{ \Gamma_{< i}}}. 
$$
Notice that if $I$ is an homogeneous ideal of the polynomial ring $R$, then  $(I^{\perp})_i = (I_i)^{\perp}. $  

\medskip
\begin{proposition}
\label{hf}
With the previous notation and for  all $i\ge 0$  
$$
\HF_{A}(i) = \dim_{k}   (I^{\perp})_i.
$$
\end{proposition}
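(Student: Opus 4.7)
The plan is to identify $I^{\perp}\cap\Gamma_{\le i}$ with the $k$-linear dual of $R/(I+\m^{i+1})$ via the contraction pairing, and then extract $\HF_A(i)$ by a telescoping argument.

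First, I would verify that restriction of the contraction pairing yields a perfect pairing of finite-dimensional $k$-vector spaces
$$R/\m^{i+1}\;\times\;\Gamma_{\le i}\;\longrightarrow\; k,$$
because the computation $(z^{M}\circ Z^{[L]})(0)=\delta_{M,L}$ exhibits the monomial bases $\{z^{M}:|M|\le i\}$ and $\{Z^{[L]}:|L|\le i\}$ as dual, and the same identity shows $\langle\m^{i+1},\Gamma_{\le i}\rangle=0$, so the pairing indeed descends. Next, under this duality, $I^{\perp}\cap\Gamma_{\le i}$ is exactly the annihilator of the image of $I$ in $R/\m^{i+1}$: one direction is immediate, and for the converse, given $g\in\Gamma_{\le i}$ with $\langle f,g\rangle=0$ for every $f\in I$, each coefficient of $f\circ g$ has the form $\langle z^{L}f,g\rangle$, and since $I$ is an ideal $z^{L}f\in I$, so every coefficient vanishes and $I\circ g=0$. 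Consequently
$$\dim_{k}(I^{\perp}\cap\Gamma_{\le i})\;=\;\dim_{k} R/(I+\m^{i+1})\;=\;\sum_{j=0}^{i}\HF_A(j),$$
the last equality coming from $R/(I+\m^{i+1})\cong A/\maxn^{i+1}$ and telescoping via $\HF_A(j)=\dim_{k}\maxn^{j}/\maxn^{j+1}$.

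Finally, the second isomorphism theorem gives
$$(I^{\perp})_i\;=\;\frac{I^{\perp}\cap\Gamma_{\le i}+\Gamma_{<i}}{\Gamma_{<i}}\;\cong\;\frac{I^{\perp}\cap\Gamma_{\le i}}{I^{\perp}\cap\Gamma_{\le i-1}},$$
so subtracting the formula above at $i$ and $i-1$ produces $\dim_{k}(I^{\perp})_i=\HF_A(i)$, as desired.

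The one delicate point—and the step I would take the most care with—is the passage from the scalar equality ``$\langle f,g\rangle=0$ for every $f\in I$'' to the stronger DP-polynomial equality ``$I\circ g=0$'' that defines $I^{\perp}$. The trick is that each coefficient of $f\circ g$ is itself a pairing value $\langle z^{L}f,g\rangle$, and the ideal property of $I$ puts $z^{L}f$ back inside $I$. Once that is in hand, the rest is finite-dimensional linear algebra and applies uniformly in the graded and local settings, regardless of whether $A$ is Artinian.
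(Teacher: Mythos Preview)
Your proof is correct and follows essentially the same route as the paper: both arguments rest on the identification $(I+\m^{i+1})^{\perp}=I^{\perp}\cap\Gamma_{\le i}$ and then pass to the quotient $\dfrac{I^{\perp}\cap\Gamma_{\le i}}{I^{\perp}\cap\Gamma_{\le i-1}}$. The paper phrases this as dualizing the short exact sequence $0\to\maxn^{i}/\maxn^{i+1}\to A/\maxn^{i+1}\to A/\maxn^{i}\to 0$ via Matlis duality, whereas you unpack the same content directly through the perfect pairing and a telescoping dimension count; the substance is identical.
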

\begin{proof}
Let's consider the following natural exact sequence of $R$-modules
$$
0 \longrightarrow
\frac{\maxn^i}{\maxn^{i+1}}
 \longrightarrow
 \frac{A}{\maxn^{i+1}}
  \longrightarrow
  \frac{A}{\maxn^{i}}
   \longrightarrow
   0.
$$
Dualizing this sequence we obtain
$$
0 \longrightarrow
(I+\max^{i})^\perp
 \longrightarrow
(I+\max^{i+1})^\perp
\longrightarrow
\left(\frac{\maxn^i}{\maxn^{i+1}}\right)^\vee
\longrightarrow
0
$$
so we get the following sequence of $k$-vector spaces:
$$
\left(\frac{\maxn^i}{\maxn^{i+1}}\right)^\vee \cong
\frac{(I+\max^{i+1})^\perp}{(I+\max^{i})^\perp} =
\frac{I^\perp \cap \Gamma_{\le i}}{I^\perp \cap \Gamma_{\le i-1}}\cong
{\frac{ I^{\perp} \cap \Gamma_{\le i} +  \Gamma_{< i}}{ \Gamma_{< i}}}.
$$
Then  the result follows since $\dim_{k} \left(\frac{\maxn^i}{\maxn^{i+1}}\right)^\vee = \dim_{k} \frac{\maxn^i}{\maxn^{i+1}} =\HF_A(i).$
\end{proof}

\medskip
\begin{example} {\rm{ Let $I=(xy, y^2-x^3) \subseteq R=k[[x,y]]$  and let $\Gamma=k_{DP}[X,Y].  $  It is easy to see that
$$ I^{\perp} =\langle X^{[3]}+ Y^{[2]}\rangle_R $$
and  $\langle X^{[3]}+ Y^{[2]}\rangle_R=\langle X^{[3]}+ Y^{[2]},  X^{[2]}, X,  Y, 1\rangle_k $ as $k$-vector space.
Hence by using Proposition \ref{hf}, one can compute the Hilbert series of $A= R/I$
$$\HS_{A}(z)= \sum_{i\ge 1} \HF_{A} (i) z^i = 1 + 2z +z^2+z^3. $$
 }}
\end{example}

\section{Structure of the inverse system}
 
In this section  $R$   denotes   the power series ring and $\m $ the maximal ideal.   
Let $I$ be an ideal of  $ R $ such  that $I\subset \max^2$ and  $A=R/I $ is Gorenstein of dimension $d \ge 1. $  Where specified $A=R/I  $ will be a standard graded algebra and in this case $R$ will be the polynomial ring and $\m$ the homogeneous maximal ideal. 

\vskip 2mm
We  assume that the ground field  $k$ is infinite. If $A$ is a standard graded $k$-algebra it is well known that we can pick   $\uz:=z_1,\dots,z_d   $  which are part of a basis of $R_1$ and they represent  a linear system of parameters for $R/I.$   If $A =R/I $ is a  local $k$-algebra we can pick   $\uz:=z_1,\dots,z_d \in \m \setminus \m^2 $  which are part of a minimal set of generators of $\m$ and their cosets represent a system of parameters for $R/I.$ 
In both cases we will say that  $\uz:=z_1,\dots,z_d $ is a {\it{regular linear sequence}} for $R/I.$ We remark that $z_1,\dots,z_d $ can be extended to a minimal system of generators of $\max,$ say $z_1,\dots,z_d, \dots, z_n  $ where $n=\dim R.$ If $ z_1,\dots  z_n$ is a minimal set of generators of $\max, $  we denote by  $Z_1, \dots, Z_n$   the corresponding dual basis  such that $z_i \circ Z_j =\delta_{ij}, $    hence $\Gamma =k_{DP}[Z_1,   \dots, Z_n]. $

\medskip
Assume
${{\uz}}:={ {z_1}},\dots,{ {z_d }} $   a regular linear sequence for $A. $ 
For every  $L=(l_1,\dots,l_d)\in \mathbb N^d $ we denote by  $\uz^L $ the sequence of pure powers
$z_1^{l_1},\dots ,z_d^{l_d}\in R$.  
Consider $L=(l_1, \dots, l_d) \in  \mathbb N_+^d$, we denote by
$$ \Gamma_{\uz^L} = (\uz^L)^{\perp} $$
the $R$-submodule of $\Gamma $ orthogonal to $\uz^L. $    Let $W = I^{\perp} $ be  the inverse system of $I$ in $\Gamma  $ and let
\begin{equation} \label{Wz}
W_{\uz^L} = W \cap  \Gamma_{\uz^L} = (I +  (\uz^L) )^{\perp}.
\end{equation}
 Since  $A/(\uz^L)A$ is an Artinian  Gorenstein local ring  for all $L\in \mathbb N_+^d$, see for instance \cite{BH97} Proposition 3.1.19(b),
then  $W_{\uz^L}$ is a non-zero cyclic  $R$-submodule of $\Gamma$ for all $L\in \mathbb N_+^d$. We are interested in  special generators of   $W_{\uz^L} $  strictly related to a given generator of  $W_{\uz}.$

We consider in $\mathbb N^d$, $d\le n$,  the componentwise ordering, i.e. given two multi-indexes
$L =(l_1\dots,l_d)$ and $M=(m_1, \dots,m_d)  \in \mathbb N^d$  then
$L\le_d M$ if and only if $l_i \le m_i $ for all $i=1,\dots, d$.
We recall that $|L|=l_1+\dots+l_d$ is the total degree of $L$.
If $ L \in \mathbb N^d, $  we denote by $\Gamma_{\le L}$, resp. $\Gamma_{< L}$,  the set of  elements of $\Gamma$ of multidegree less or equal (resp. less) than $L$  with respect to   $Z_1, \dots, Z_d. $ We remark that if $\uz=z_1, \dots, z_d,  $ then
\begin{equation} \label{L}
(\uz^L)^{\perp} = \Gamma_{<L}.
\end{equation}

\vskip 2mm
\begin{lemma}
\label{cut}
If  $L\in \mathbb N_+^d, $ then

\noindent
(i) $W_{\uz^L} / \m \circ W_{\uz^L} \cong  Soc (A/\uz^L A)^\vee $,

\noindent
(ii) $\bigcup_{L \in \mathbb N_+^d}  W_{\uz^L} = I^{\perp} $
\end{lemma}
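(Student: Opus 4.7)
My plan is to identify $W_{\uz^L}$ with the Matlis dual of the Artinian Gorenstein quotient $A/\uz^L A$ and derive (i) from a standard socle–cogenerator duality, while (ii) will follow at once from the fact that elements of $\Gamma$ have finite support. Concretely, set $J := I + (\uz^L)$ and $B := R/J = A/\uz^L A$. Since $\uz = z_1, \ldots, z_d$ is a regular sequence on $A$, $B$ is Artinian local (resp.\ graded) Gorenstein by the cited \cite[Proposition 3.1.19(b)]{BH97}, and by construction $W_{\uz^L} = J^{\perp}$ is the Macaulay inverse system of $B$, equivalently its Matlis dual $B^{\vee}$. Because every element of $J$ annihilates $W_{\uz^L}$ under contraction, the $R$-action on $W_{\uz^L}$ factors through $B$; in particular
\[
\m \circ W_{\uz^L} \;=\; \maxn \cdot B^{\vee},
\]
where $\maxn = \m/J$ is the maximal ideal of $B$.

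For (i), I would invoke the following elementary consequence of Matlis duality for the Artinian local $k$-algebra $B$: a $k$-linear map $\varphi\colon M \to k$ factors through $M/\maxn M$ exactly when $\maxn\,\varphi = 0$, so for every finitely generated $B$-module $M$,
\[
(M/\maxn M)^{\vee} \;\cong\; \soc(M^{\vee}).
\]
Applied with $M = B^{\vee}$ and combined with Matlis double-duality $B^{\vee\vee} \cong B$, this yields $(B^{\vee}/\maxn B^{\vee})^{\vee} \cong \soc(B)$; dualising once more (all $k$-vector spaces in sight are finite-dimensional) gives $B^{\vee}/\maxn B^{\vee} \cong \soc(B)^{\vee}$. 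Substituting back $B^{\vee} = W_{\uz^L}$ and $\maxn B^{\vee} = \m\circ W_{\uz^L}$ produces exactly the isomorphism claimed in (i).

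For (ii), the inclusion $W_{\uz^L} \subseteq I^{\perp}$ is automatic from $I \subseteq J$. For the reverse, fix $F \in I^{\perp}$; since $\Gamma = \bigoplus_{i\ge 0}\Gamma_i$ by definition, $F$ is a \emph{finite} $k$-linear combination of DP-monomials. Choose an integer $s$ strictly greater than every exponent of $Z_1,\ldots,Z_d$ appearing in the support of $F$, and set $L = (s,\ldots,s) \in \mathbb N_+^d$. Then for each such $Z^{[M]}$ and each $1 \le i \le d$,
\[
z_i^{s} \circ Z^{[M]} \;=\; Z^{[M - s\,e_i]} \;=\; 0,
\]
so $\uz^L \circ F = 0$ and hence $F \in W_{\uz^L} \subseteq \bigcup_{L} W_{\uz^L}$.

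The only mildly delicate point is recognising $\m\circ W_{\uz^L} = \maxn B^{\vee}$ in the reduction to Matlis duality, but this is immediate from $J\circ W_{\uz^L} = 0$, which is the very definition of $W_{\uz^L}=J^{\perp}$. No step is expected to present a serious obstacle; the content of (i) is genuinely just Matlis duality packaged through the regular sequence, and that of (ii) is the finite-support property of $\Gamma$.
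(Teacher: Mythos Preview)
Your proof is correct and follows essentially the same route as the paper. For (ii) both arguments are identical in substance: the paper invokes the identities $W_{\uz^L}=I^{\perp}\cap\Gamma_{<L}$ and $\bigcup_L\Gamma_{<L}=\Gamma$, which is exactly your ``finite support'' observation. For (i) the paper simply cites \cite[Lemma~1.9(ii)]{CI12}, whereas you supply the underlying Matlis-duality computation $(B^{\vee}/\maxn B^{\vee})^{\vee}\cong\soc(B)$; this is the expected content behind the citation, so the approaches agree.
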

\begin{proof} (i) see \cite{CI12}, Lemma 1.9 (ii). \par
\noindent  (ii) follows from (\ref{L}) and (\ref{Wz}) because  $$\bigcup_{L \in \mathbb N_+^d}  W_{\uz^L} = \bigcup_{L \in \mathbb N_+^d} (I^{\perp} \cap \Gamma_{<L}) = I^{\perp}. $$
\end{proof}

\medskip
 
For all $i=1,\dots,d$ we denote by ${\bf{\gamma_i}} = (0, \dots, 1, \dots 0) \in \mathbb N^d $ the   $i$-th  the coordinate vector.
We write $\md=(1,\dots,1)\in \mathbb N^d, $ more in general, for all positive integer  $t \in  \mathbb N$ we write
$${\bf{t}}_d =(t,\dots,t) \in  \mathbb N^d
$$

\medskip
The following result is a consequence of a modified Koszul complex on $R/I.$

\begin{proposition}
\label{koszul}
$(i)$
Assume $d=1$.
For all $l \ge 2$ there is an exact sequence of finitely generated $R$-submodules of $\Gamma$
$$
0
\longrightarrow
W_{z_1}
\longrightarrow
 W_{z_1^l}
 \stackrel{z_1\circ }{\longrightarrow}  W_{z_1^{l-1}}
 \longrightarrow 0.
 $$

\noindent
$(ii)$
Assume $d\ge 2. $
If $\uz=z_1,\dots, z_d, $ then
for all  $L\in \mathbb N^r$ such that $  L \ge  {\bf{2}}_d $ there is an  exact sequence  of finitely generated $R$-submodules of $\Gamma$

 $$
0
 \longrightarrow
 W_{\uz}
 \longrightarrow
W_{\uz^L}
\stackrel{ }{\longrightarrow}
\bigoplus_{k=1}^{d}  W_{\uz^{L-\gamma_k}}
\stackrel{ }{\longrightarrow}
\bigoplus_{1\le i<j\le d}  W_{\uz^{L-\gamma_i-\gamma_j}}
$$
\end{proposition}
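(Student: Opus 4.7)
The overall plan is to apply Matlis duality to exact complexes of $A$-modules on the ``ring side''. The contravariant functor $(-)^{\vee} = \Hom_R(-, \Gamma)$ is exact (since $\Gamma = E_R(k)$ is the injective hull of $k$), commutes with finite direct sums, and sends $A/J = R/(I+J)$ to $(I+J)^{\perp}$, so $(A/(\uz^M))^{\vee} = W_{\uz^M}$ for every $M \in \mathbb N_+^d$. Since $\uz$ is a regular sequence on $A$, each $A/(\uz^M)$ is an Artinian local ring, so by Macaulay's correspondence its inverse system is a finitely generated $R$-submodule of $\Gamma$, giving the finite-generation claim for free.

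For part (i), the key observation is that $z_1$ is a nonzerodivisor on $A$, which gives the exact sequence
\[
0 \longrightarrow A/(z_1^{l-1}) \xrightarrow{\,\cdot z_1\,} A/(z_1^l) \longrightarrow A/(z_1) \longrightarrow 0.
\]
Dualizing and using the identification $\phi \in \Hom_R(A/J, \Gamma) \leftrightarrow \phi(1) \in J^{\perp}$, the dual of $\,\cdot z_1\,$ becomes the contraction $z_1 \circ$, and the claimed sequence drops out.

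For part (ii), the ring-side complex to dualize is
\[
\bigoplus_{i<j} A/(\uz^{L-\gamma_i-\gamma_j}) \xrightarrow{\,\Phi\,} \bigoplus_{k=1}^d A/(\uz^{L-\gamma_k}) \xrightarrow{\,\phi\,} A/(\uz^L) \longrightarrow A/(\uz) \longrightarrow 0,
\]
where $\phi((a_k)_k) = \sum_k z_k a_k$ and the Koszul-type map $\Phi$ is given by $\Phi((c_{ij}))_k = \sum_{j>k} z_j c_{kj} - \sum_{i<k} z_i c_{ik}$. Well-definedness on the quotients follows from the inclusions $z_k \cdot \uz^{L-\gamma_k} \subseteq \uz^L$ and $z_j \cdot \uz^{L-\gamma_i-\gamma_j} \subseteq \uz^{L-\gamma_i}$, and $\phi \circ \Phi = 0$ is an immediate cancellation. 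Exactness at $A/(\uz^L)$ is clear, since $\operatorname{coker}\phi = A/(\uz^L + (\uz)) = A/(\uz)$. The heart of the proof is exactness at $\bigoplus_k A/(\uz^{L-\gamma_k})$: lift $(a_k)$ with $\sum z_k a_k \in (\uz^L)$ to $A$ and write $\sum z_k a_k = \sum_k z_k^{l_k} b_k$, so that $(a_k - z_k^{l_k-1} b_k)$ lies in the kernel of the usual Koszul map $A^d \to A$. Because $\uz$ is a regular sequence on $A$, this kernel is generated by the Koszul syzygies $z_j e_i - z_i e_j$ for $i<j$, producing $c_{ij} \in A$ such that $a_k - z_k^{l_k-1} b_k = \Phi((c_{ij}))_k$ in $A$; the correction term $z_k^{l_k-1} b_k$ vanishes modulo $\uz^{L-\gamma_k}$, so $(a_k) = \Phi((c_{ij}))$ in the quotient. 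Dualizing now yields the claimed four-term exact sequence, with $\phi^{\vee} \colon g \mapsto (z_k \circ g)_k$ and $\Phi^{\vee} \colon (g_k)_k \mapsto (z_i \circ g_j - z_j \circ g_i)_{i<j}$ as the contraction differentials.

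The main technical obstacle is this last exactness check, which requires transporting the Koszul syzygies for $\uz$ across the shifted ideals $\uz^{L-\gamma_k}$; the hypothesis $L \ge {\bf 2}_d$ enters precisely to ensure that every $L - \gamma_i - \gamma_j$ still lies in $\mathbb N_+^d$, keeping all the intermediate rings Artinian and all modules inside the Matlis-dual framework of Section 2.
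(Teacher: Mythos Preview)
Your proposal is correct and follows essentially the same route as the paper: build the Koszul-type complex on the ring side $A/(\uz^L)$, verify exactness at the middle term by lifting a tuple in $\ker\phi$, subtracting off the correction $z_k^{l_k-1}b_k$, and invoking the Koszul syzygies for the regular sequence $\uz$ on $A$, then apply Matlis duality. The only cosmetic discrepancy is a sign in your formula for $\Phi^{\vee}$ (with your $\Phi$ the $(i,j)$-component of the dual is $z_j\circ g_i - z_i\circ g_j$), which of course does not affect exactness.
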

\begin{proof}
$(i)$
Assume $d=1$.
Since $z_1$ is regular on $R/I, $ for all $l \ge 2, $ the following  sequence of Artinian Gorenstein rings is exact
$$
0
\longrightarrow \frac{R}{I+(z_1^{l-1})}
\stackrel{\cdot z_1}{\longrightarrow}  \frac{R}{I+(z_1^{l})}
\longrightarrow
\frac{R}{I+(z_1)}
\longrightarrow
0
$$
This induces  by duality the following exact sequence of finitely generated $R$-submodules of $\Gamma$
$$
0
\longrightarrow
W_{z_1}
\longrightarrow
 W_{z_1^l}
 \stackrel{z_1\circ }{\longrightarrow}  W_{z_1^{l-1}}
 \longrightarrow 0.
 $$

\noindent
$(ii)$
Assume $d\ge 2. $
If $\uz=z_1,\dots, z_d, $ then we prove that
for all  $L\in \mathbb N^d$ such that $  L \ge  {\bf{2}}_d $ the following   sequence of $R$-modules is exact
$$
\bigoplus_{1\le i<j\le d}  \frac{R}{I+(\uz^{L-\gamma_i-\gamma_j})}
\stackrel{\phi_L}{\longrightarrow}
\bigoplus_{k=1}^{d}  \frac{R}{I+(\uz^{L-\gamma_k})}
\stackrel{\varphi_L}{\longrightarrow}
 \frac{R}{I+(\uz^{L})}
 \longrightarrow
 \frac{R}{I+(\uz)}
 \longrightarrow
 0
$$
where: $\varphi_L(\overline{v_1},\dots,\overline{v_d})=\sum_{k=1}^d z_k  \overline{v_k}  $, and

\noindent
$\phi_L(\overline{v_{i,j}};1\le i<j\le d)=\sum_{1\le i<j\le r} (0,\dots,z_j,\dots,-z_i,\dots,0) \overline{v_{i,j}}$;

\noindent
for short we denote by $\overline {v } $  the class of an element $v\in R $ in the above different quotients.

Since  $  L \ge  {\bf{2}}_d $  for all $1\le i< j\le d$ we have
$L-\gamma_i-\gamma_j\in \mathbb N^d$.
It is easy to prove that $\varphi_L \phi_L=0$, so we have to prove that
$\ker(\varphi_L)\subset \img(\phi_L)$.
Given  $(\overline{v_1},\dots,\overline{v_d})\in \ker(\varphi_L)$ we have that
$\sum_{k=1}^d z_k v_k \in I+(\uz^L)$,
so there are $\lambda_1,\dots, \lambda_r\in R$ such that
$$
\sum_{k=1}^d z_k (v_k- \lambda_i z_k^{l_k-1}) \in I.
$$
Since $\uz$ is a regular sequence on $A=R/I$ we deduce that, modulo $I$,
$$
((v_k- \lambda_i z_k^{l_k-1})_{k=1,\dots,d}) \equiv \sum_{1\le i<j\le d}\mu_{i,j}(0,\dots,z_j^{ },\dots,-z_i^{ },\dots,0)
$$
for some $\mu_{i,j}\in R$, $1\le i<j\le d$.
From this we deduce that
 $(\overline{v_1},\dots,\overline{v_d})\in \img(\phi_L)$.

Now the exact sequence of Artinian Gorenstein rings  induces by Matlis duality  the following exact sequence  of $\Gamma$-modules
$$
0
 \longrightarrow
 W_{\uz}
 \longrightarrow
W_{\uz^L}
\stackrel{\varphi_L^*}{\longrightarrow}
\bigoplus_{k=1}^{d}  W_{\uz^{L-\gamma_k}}
\stackrel{\phi_L^*}{\longrightarrow}
\bigoplus_{1\le i<j\le d}  W_{\uz^{L-\gamma_i-\gamma_j}}
$$
with
$\varphi_L^*(v)=(z_1\circ v,\dots, z_d\circ v)$
and
$\phi_L^*(v_{1},\dots, v_d)= (z_j\circ v_i-z_i\circ v_j;1\le i<j\le d)$. This proves $(ii).$
\end{proof}

\bigskip
We recall the following basic fact that will be useful in the following.

 \begin{lemma}
\label{back}
Let $(R, \m, k) $ be a local ring and let $f:M\longrightarrow N$ be an epimorphism between two non-zero cyclic
$R$-modules.
Let $m\in M$ be an element such that $f(m)$ is a generator of $N$.
Then $m$ is a generator of $M$.
\end{lemma}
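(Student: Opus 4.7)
The plan is to reduce the question to a statement about the residue fields via Nakayama's lemma. Since $M$ and $N$ are cyclic and nonzero, and $R$ is local, both $M/\m M$ and $N/\m N$ are one-dimensional $k$-vector spaces: each is at most one-dimensional because a generator of the module descends to a generator over $k$, and each is nonzero by Nakayama applied to the nonzero finitely generated modules $M$ and $N$.

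Next, I would consider the induced $k$-linear map $\bar{f}\colon M/\m M \longrightarrow N/\m N$. Because $f$ is surjective, so is $\bar{f}$, and since both sides are $k$-vector spaces of dimension one, $\bar{f}$ is in fact an isomorphism.

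Now I would translate the hypothesis on $m$ into this picture. The assumption that $f(m)$ generates $N$ is, by Nakayama's lemma, equivalent to saying that the class $\overline{f(m)}$ is nonzero in $N/\m N$. Since $\overline{f(m)} = \bar{f}(\bar{m})$ and $\bar{f}$ is injective, the class $\bar{m}\in M/\m M$ must also be nonzero; being a nonzero element of a one-dimensional $k$-vector space, it generates $M/\m M$.

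The conclusion then follows from a second application of Nakayama's lemma: the single element $m\in M$ whose residue generates $M/\m M$ must generate the finitely generated $R$-module $M$ itself. No step is a real obstacle; the only subtlety is keeping track of the two uses of Nakayama (once to turn ``generator'' into ``nonzero in the $k$-quotient,'' and once to lift back).
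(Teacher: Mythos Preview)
Your proof is correct and follows essentially the same route as the paper's: pass to the one-dimensional $k$-vector spaces $M/\m M$ and $N/\m N$, note that $\bar f$ is an isomorphism, and pull back the nonvanishing of $\overline{f(m)}$ to that of $\bar m$. The paper is more terse and leaves the applications of Nakayama implicit, but the argument is the same.
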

\begin{proof}
We remark that $f$ induces an isomorphism    of
one-dimensional $k$-vector spaces $\bar f:M/\max M\longrightarrow N/\max N.$ 
Since the coset of $f(m)$ in $N/\max N$ is non-zero the coset of $m$ in
$M/\max M$ is non-zero.
Hence $m$ is a generator of $M$.
\end{proof}

\medskip

We remark that if $L = \md, $ then $  W_{\uz^\md} =W_{\uz} = (I+(z_1,\dots, z_d))^{\perp}$.  Then   $W_{\uz} $ is a non-zero cyclic $R$-submodule of $\Gamma $   and denote by $H_{\md}  $  a generator:
$$ W_{\md} = \langle H_{\md}\rangle.$$
In particular $W_{\uz} $ is the dual of the Artinian reduction $R/I +(\uz). $ It is clear that $H_{\md}$ depends from the regular sequence $\uz$ we consider.
Our goal is  to lift the generator $H_{\md}$ of $W_{\md}$ to a suitable generator $H_L  $ of $W_{\uz^L}= (I +  (\uz^L) )^{\perp}$, for all $L=(l_1,\dots,l_d)\in \mathbb N_+^d $.

\bigskip
\begin{proposition}
\label{lifting}
For all $L=(l_1,\dots,l_d)\in \mathbb N_+^d  $  and  for all $i=1,\dots, d$ such that $l_i\ge 2, $ let  $ H_{L-\gamma_i}$  be a generator  of $ W_{\uz^{L-\gamma_i}}.$ There exists  a  generator $H_L$ of $W_L$ satisfying
$$
z_i \circ H_L=H_{L-\gamma_i}
$$
 for all $i=1,\dots, d$ such that $l_i\ge 2$.
\end{proposition}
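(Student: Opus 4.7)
The plan is to prove the proposition as the inductive step of a strong induction on $|L| = l_1 + \cdots + l_d$, simultaneously constructing a compatible family of generators $\{H_L\}_{L \in \mathbb{N}_+^d}$. The base case $L = \md$ is vacuous: no index satisfies $l_i \ge 2$, so one simply sets $H_\md$ to be any generator of the already-established cyclic module $W_\md$.

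For the inductive step, fix $L$ with $|L| > d$ and set $S = \{i : l_i \ge 2\}$. The first key step is to verify that the inductively chosen $H_{L-\gamma_i}$ for $i \in S$ automatically satisfy the cocycle identity
$$
z_j \circ H_{L-\gamma_i} \;=\; z_i \circ H_{L-\gamma_j} \qquad (i \ne j,\ i,j \in S).
$$
This holds because the $j$-th entry of $L - \gamma_i$ equals $l_j \ge 2$ and the $i$-th entry of $L - \gamma_j$ equals $l_i \ge 2$, so two applications of the inductive hypothesis force both sides to equal $H_{L-\gamma_i-\gamma_j}$.

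With the cocycle in hand, I would apply \propref{koszul}(ii) to lift the compatible tuple. When $L \ge {\bf{2}}_d$ the proposition applies directly. Otherwise, when $S \subsetneq \{1,\dots,d\}$, I would first pass to the quotient $A'' = A/(z_j : j \notin S)$, which is Gorenstein of dimension $|S|$ with regular linear sequence $(z_i)_{i \in S}$; the inverse systems of Artinian quotients of $A''$ agree with those of the corresponding quotients of $A$, since the defining ideals automatically contain $(z_j : j \notin S)$. In either case \propref{koszul}(ii) yields an exact sequence
$$
W_{\uz^L} \stackrel{\varphi^*}{\longrightarrow} \bigoplus_{i \in S} W_{\uz^{L-\gamma_i}} \stackrel{\phi^*}{\longrightarrow} \bigoplus_{\substack{i < j \\ i,j \in S}} W_{\uz^{L-\gamma_i-\gamma_j}},
$$
and the cocycle identity places $(H_{L-\gamma_i})_{i \in S}$ in $\ker \phi^* = \img \varphi^*$, producing $H_L \in W_{\uz^L}$ with $z_i \circ H_L = H_{L-\gamma_i}$ for every $i \in S$.

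It remains to see that $H_L$ generates $W_{\uz^L}$. Fix any $i_0 \in S$; the individual contraction $z_{i_0} \circ : W_{\uz^L} \to W_{\uz^{L-\gamma_{i_0}}}$ is surjective (either from the same exactness argument applied to the singleton $\{i_0\} \subseteq S$, or from \propref{koszul}(i) after passing to a one-dimensional Gorenstein quotient), and by construction it sends $H_L$ to the generator $H_{L-\gamma_{i_0}}$, so \lemref{back} forces $H_L$ to generate the source. The main obstacle in this plan is the reduction step when $S$ is a proper subset of $\{1,\dots,d\}$: one must justify passing to the lower-dimensional quotient $A''$ and check that the exact sequence of \propref{koszul}(ii) transfers cleanly to inverse systems inside $\Gamma$. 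Once that is established the remainder is routine, using Matlis duality, the regularity of $\uz$, and a final invocation of \lemref{back}.
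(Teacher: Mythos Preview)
Your proposal is correct and follows essentially the same route as the paper: both arguments build the compatible family $\{H_L\}$ by induction, reduce to the situation where \propref{koszul} applies by passing to the Gorenstein quotient $A/(z_j : j\notin S)$ (the paper writes this as $J=I+(z_{r+1},\dots,z_d)$ after a permutation), verify the cocycle $z_j\circ H_{L-\gamma_i}=z_i\circ H_{L-\gamma_j}=H_{L-\gamma_i-\gamma_j}$ from the inductive hypothesis, lift via the exact sequence, and then invoke \lemref{back} to see that the lift generates. The only cosmetic difference is that the paper inducts on the lexicographic pair $(|L|_+,\,|L|-(d-|L|_+))$ and separates the case $|S|=1$ explicitly via \propref{koszul}(i), whereas you induct on $|L|$ and treat the cases uniformly; your observation that the defining ideals already contain $(z_j:j\notin S)$, so the inverse systems in $\Gamma$ are unchanged under the reduction, is exactly the point the paper is using implicitly.
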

\begin{proof}
For every $L\in \mathbb N^d_+$ we define $|L|_+$ as the number of positions $i\in \{1,\dots,d\}$
such that $l_i\ge 2$.
We proceed by recurrence  on the pair $(|L|_+, |L|-(d-|L|_+))\in \{1,\dots, d\}\times \mathbb N$.
Notice that
$|L|-(d-|L|_+)\ge 2 |L|_+$.

Assume that $|L|_+=1$.
After a  permutation, we may assume that
$L=(l,1,\dots,1)$ with $l\ge 2$.
Consider the ideal $J=I+(z_2,\dots,z_d)$;
from \propref{koszul} $(i)$ we get an exact sequence
$$
0
\longrightarrow
W_{\md}
\longrightarrow
W_L=(J+(z_1^l))^\perp
\stackrel{z_1\circ}{\longrightarrow}
W_{L-\gamma_1}=(J+(z_1^{l-1}))^\perp
\longrightarrow
0.
$$
Notice that
$|L-\gamma_1|_+\le 1$ and that if $|L-\gamma_1|_+= 1$ then
$|L-\gamma_1|-(d-1)= |L|-(d-1)-1$.
 Hence by induction  we know that there exists $H_{L}\in W_L$ satisfying
 $z_1 \circ H_L=H_{L-\gamma_1}$ and
$W_{L-\gamma_1}=\langle H_{L-\gamma_1}\rangle$.
\lemref{back} applied to the epimorphism
$$
W_L=(J+(z_1^l))^\perp
\stackrel{x_1\circ}{\longrightarrow}
W_{L-\gamma_1}=(J+(z_1^{l-1}))^\perp
\longrightarrow
0
$$
with $m=H_L$ gives that
 $H_L$ is a generator of $W_L$.

We may assume that $r=|L|_+\ge 2$.
After a  permutation we may assume that
$L=(l_1,\dots, l_r,1\dots, 1)$ with $l_i\ge 2$ for $i=1,\dots, r$.
We set $\uz'=z_1,\dots,z_r$ and  $L'=(l_1,\dots, l_r)\in \mathbb N_+^r$.
Consider the ideal $J=I+(z_{r+1},\dots,z_d)$;
from \propref{koszul} $(ii)$ we get an exact sequence
$$
0\longrightarrow
T_{\mr}
\longrightarrow
T_{L'}
\stackrel{\varphi_{L'}^{*}}{\longrightarrow}
\bigoplus_{k=1}^{r}  T_{L'-\gamma_k}
\stackrel{\phi_{L'}^*}{\longrightarrow}
\bigoplus_{1\le i<j\le r}  T_{L'-\gamma_i-\gamma_j}
$$
where
$T_{\mr}=W_{\md}$,
$T_{L'}=(J+\uz'^{L'})^\perp=(I+ \uz^L)^\perp=W_L$,
$T_{L'-\gamma_k}=(J+\uz'^{L'-\gamma_k})^\perp=(I+ \uz^{L-\gamma_k})^\perp=W_{{L- \gamma_k}}$
and
$T_{L'-\gamma_i-\gamma_j}=(J+\uz'^{L'-\gamma_i-\gamma_j})^\perp=(I+ \uz^{L-\gamma_i-\gamma j})^\perp=W_{L-\gamma_i-\gamma j}$.
 Hence, by induction,  we know that for all $k=1, \dots, r$ there exists $H_{L-\gamma_k}\in \Gamma$ such that
 $W_{L-\gamma_k}=\langle H_{L-\gamma_k}\rangle$
 and
  $z_i \circ H_{L-\gamma_k}=H_{L-\gamma_k-\gamma_i}$ for all $i\in\{ 1,\dots, r\}, i\neq k$.
  From this we deduce that
  $$
  (H_{L-\gamma_1},\dots,H_{L-\gamma_r} )\in \ker(\phi_{L'}^*),
  $$
from the above exact sequence there exists  $H_L\in W_L$ such that
$z_k\circ H_L=H_{L-\gamma_k}$ for all $k=1,\dots,r$.
The same argument as before proves that $H_L$ is a generator of $W_L$.
\end{proof}

\bigskip
\begin{remark}
\label{integral}
With the above notation,   given two  DP-polynomials $H, G\in \Gamma, $ we say that $G$ is a primitive of $H$ with respect to $z_1 \in R  $ if
$z_1\circ G=H$.  
From the definition of $\circ, $ we will get   
$$G= Z_1  H + C$$
for some $C\in \Gamma$ such that $z_1 \circ C=0.  $   Remark that $Z_1  H $  denotes the usual multiplication in a polynomial ring and we do not use the internal multiplication in $\Gamma$ as DP-polynomials. 
Hence in  Proposition \ref{lifting},  we will say that $H_{L}$    is a primitive  of $H_{L-\gamma_i}$
with respect to $z_i$  for all $i=1,\dots,d.  $
\end{remark}

\bigskip
We prove now the main result of this paper which  is an extension to the $d$-dimensional case of Macaulay's Inverse System correspondence. We give a complete description of the  $R$-submodules of $\Gamma$ whose annihilator is a $d$-dimensional Gorenstein local ring. In the Artinian case they are cyclic generated by a polynomial of $\Gamma, $ in positive dimension the dual modules are not finitely generated and further conditions  will be required.

\bigskip
\begin{definition}
\label{G-modules} Let $d\le n$ be a positive integer.   An   $R$-submodule $M  $  of  $ \Gamma$   is called $G_d$-admissible if it admits a system of  generators $\{H_L\}_{ L\in \mathbb N_+^d} $ in $ \Gamma=k_{DP}[Z_1, \dots, Z_n]$  satisfying  the following conditions
\begin{enumerate}
\item[(1)] for all $L\in \mathbb N_+^d$ and for all $i=1,\dots,d$
$$
z_i \circ H_L =
\left\{
\begin{array}{ll}
 H_{L-\gamma_i}   & \text{ if }  L-\gamma_i >0   \ \\
0 & \text{ otherwise.}
\end{array}
\right.
$$
\item[(2)] $\ann_R( H_L )\circ H_{L+\gamma_i}=\langle H_{L-(l_i-1)\gamma_i}\rangle$ for all $i=1,\dots,d$ and
$L=(l_1,\cdots,l_d)\in \mathbb N_+^d$.
\end{enumerate}

\medskip

\noindent
If this is the case, we also say that $M$ is
$G_d$-admissible with respect to the elements   $z_1, \dots, z_d \in R$.
\end{definition}

\begin{remark}
\label{nonzero}
Given a $G_d$-admissible set  $\{H_L\}_{ L\in \mathbb N_+^d} $ in $\Gamma, $
  the condition
  $H_{\md}=0$  is equivalent to the vanishing of the $R$-module  $M=\langle H_L, L\in \mathbb N_+^d\rangle$.
  In fact, assume that $H_{\md}=0$.
  We proceed by induction on $t=|L|$ where $L\in \mathbb N_{+}^d$.
  If $t=d$ then $L=\md$ and $H_L=0$ by hypothesis.
Assume that $H_L=0$  for all $L$ with $|L|\le t$.
We only need to prove that $H_{L+\gamma_i}=0$ for all $i=1,\cdots,n$.
Since $H_L=0$ we get that $\ann_R(H_L)=R$.
From the condition $(ii)$ of the above definition we get
$R\circ H_{L+\gamma_i}=\langle H_{L-(l_i-1)\gamma_i}\rangle=0$,
so $H_{L+\gamma_i}=0$.
The converse is trivial.
\end{remark}

\vskip 2mm
 
 \begin{theorem}
\label{bijecGor}
Let $(R, \m) $ be the power series ring and let $d\le \dim R$ be a positive integer.   There is a one-to-one correspondence $\mathcal C$ between the following sets:

\vskip 2mm
\noindent
(i)
 $d$-dimensional Gorenstein quotients of  $R$,

\noindent
(ii) non-zero $G_d$-admissible $R$-submodules $M=\langle H_L, L\in \mathbb N_+^d\rangle$ of $\Gamma$.

\vskip 2mm
\noindent
In particular, given an ideal $I\subset R$ with $A= R/I$ satisfying $(i)$, then
 $$\mathcal C(A)= I^{\perp} = \langle H_L,  L\in \mathbb N_+^d\rangle \subset \Gamma   \ \ {\text{with}}\ \
  \langle H_L\rangle=(I+(\uz^L))^\perp $$ is $G_d$-admissible with respect to a regular linear sequence $\uz=z_1,\dots, z_d $ for $R/I.$ 
 Conversely, given a  $R$-submodule $M$ of $\Gamma $ satisfying  (ii), then $$\mathcal C^{-1}(M)=R/I  \ \ {\text{with}}\ \
 I= \ann_R(M) = \bigcap_{L\in \mathbb N_+^d}\ann_R(H_L)$$ is a $d$-dimensional Gorenstein ring. 

\end{theorem}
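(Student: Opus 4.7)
The plan is to verify the correspondence $\mathcal{C}$ in both directions.

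For $(i)\Rightarrow(ii)$, I start with $A=R/I$ a $d$-dimensional Gorenstein quotient and pick a regular linear sequence $\uz=z_1,\dots,z_d$. Since $\uz$ is regular on $A$, the reduction $A/(\uz^L)A$ is Artinian Gorenstein for every $L\in\mathbb{N}_+^d$, so by Macaulay's correspondence $W_{\uz^L}=(I+(\uz^L))^\perp$ is a non-zero cyclic $R$-submodule of $\Gamma$. \propref{lifting} then provides a coherent family $\{H_L\}$ with $z_i\circ H_L=H_{L-\gamma_i}$ whenever $l_i\ge 2$ (condition~(1) of \defref{G-modules}; the case $l_i=1$ is automatic because $z_i\in(\uz^L)\subseteq\ann_R(H_L)$). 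For condition~(2), Matlis duality yields $\ann_R(H_L)=I+(\uz^L)$, so $\ann_R(H_L)\circ H_{L+\gamma_i}=(\uz^L)\circ H_{L+\gamma_i}$; among the generators $z_j^{l_j}$ of $(\uz^L)$ only the $j=i$ contribution survives (the others already kill $H_{L+\gamma_i}$), and iterating condition~(1) gives $z_i^{l_i}\circ H_{L+\gamma_i}=H_{L-(l_i-1)\gamma_i}$. Finally \lemref{cut}(ii) delivers $I^\perp=\bigcup_L\langle H_L\rangle$.

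For $(ii)\Rightarrow(i)$, set $J_L:=\ann_R(H_L)$ and $I:=\ann_R(M)=\bigcap_L J_L$; by Matlis duality, $M=I^\perp$. I argue by induction on $d$, with base case $d=0$ being Macaulay's classical correspondence. For the inductive step, I first check that $z_1$ is a non-zero-divisor on $R/I$: if $z_1f\in I\subseteq J_L$ for every $L$, condition~(1) applied to any $L$ with $l_1\ge 2$ gives $0=(z_1f)\circ H_L=f\circ H_{L-\gamma_1}$, so $f\in J_{L-\gamma_1}$; since $L-\gamma_1$ ranges over all of $\mathbb{N}_+^d$, we get $f\in I$. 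Next I consider $M'':=M\cap(z_1)^\perp$, viewed as an $R'':=R/(z_1)$-submodule of $\Gamma'':=k_{DP}[Z_2,\dots,Z_n]$; it is non-zero because $H_\md\in M''$ by \remref{nonzero}. The heart of the argument is to prove $M''=\bigcup_{L''\in\mathbb{N}_+^{d-1}}\langle H_{(1,L'')}\rangle$ and that this family is $G_{d-1}$-admissible over $R''$ with respect to $z_2,\dots,z_d$: given $G=f\circ H_L\in M''$, the case $l_1=1$ is immediate, while if $l_1\ge 2$ then $z_1f\in J_L$ forces $f\in J_{L-\gamma_1}$, and condition~(2) applied at $L-\gamma_1$ with $i=1$ produces $f\circ H_L\in\langle H_{L-(l_1-1)\gamma_1}\rangle=\langle H_{(1,L')}\rangle$; conditions~(1) and~(2) for $M''$ are then direct specializations of those for $M$, using that $z_1$ acts trivially on the relevant generators.

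Applying the inductive hypothesis, $R''/\ann_{R''}(M'')$ is Gorenstein of dimension $d-1$; Matlis duality identifies $\ann_R(M'')=I+(z_1)$, so this ring is precisely $R/(I+(z_1))$. Combined with $z_1$ being a non-zero-divisor on $R/I$, the standard fact that Gorensteinness lifts through a regular element (\cite{BH97}) forces $R/I$ to be Gorenstein of dimension $d$. The main obstacle is the identification $M\cap(z_1)^\perp=\bigcup_{L''}\langle H_{(1,L'')}\rangle$ in the inductive step, which is where condition~(2) of \defref{G-modules} enters essentially; the remaining verifications amount to duality bookkeeping.
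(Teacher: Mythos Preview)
Your argument for $(i)\Rightarrow(ii)$ matches the paper's essentially verbatim. For $(ii)\Rightarrow(i)$ your proof is correct but follows a genuinely different route. The paper argues directly through four claims: the decisive one (Claim~2) establishes $\ann_R(H_L)=I+(\uz^L)$ for every $L$ by an $\m$-adic limit argument---given $\beta_L\in I_L$, one writes $\beta_L=\beta_{L+{\bf t}_d}+\sum_{i<t}\omega^i$ with $\omega^i\in(\uz^{L+{\bf i}_d})$ and lets $t\to\infty$ in the complete ring to split $\beta_L$ as an element of $I$ plus one of $(\uz^L)$. From this the paper reads off that $\uz$ is a regular sequence on $R/I$ and that $R/(I+(\uz))$ is Artinian Gorenstein, hence so is $R/I$.

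You instead induct on $d$: after checking that $z_1$ is a non-zero-divisor (same computation as the paper's Claim~3), you use condition~(2) of \defref{G-modules} to prove $M\cap(z_1)^\perp=\bigcup_{L''}\langle H_{(1,L'')}\rangle$ and that this family is $G_{d-1}$-admissible over $R/(z_1)$; the inductive hypothesis plus the lifting of Gorensteinness through a regular element finishes. Your approach bypasses the limit argument entirely by invoking the global Matlis identity $(\ann_R M)^\perp=M$ to get $M\cap(z_1)^\perp=(I+(z_1))^\perp$ in one stroke. The trade-off is that the paper's direct method produces the formula $\ann_R(H_L)=I+(\uz^L)$ as a byproduct---used later in \propref{finite} and the examples---whereas in your approach this identity is only recovered \emph{a posteriori}, after $R/I$ is known to be Gorenstein. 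Your inductive reduction is cleaner and makes the role of condition~(2) more transparent; the paper's argument is more self-contained and yields the explicit description of each $\ann_R(H_L)$ along the way.
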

 \begin{proof}
Let $A=R/I $ be a quotient of $R$  satisfying $(i)$ and consider $\uz=z_1, \dots, z_d$ in $R$ a linear regular sequence modulo $I.$ Let $z_1, \dots, z_d, \dots, z_n$ be a minimal system of generators of $\max$ and let $Z_1, \dots, Z_n$ be the dual base.
Let $H_{\md}$ be a generator of $W_{\md}=(I+(\uz))^{\perp}$ in $\Gamma=k_{DP}[Z_1, \dots, Z_n].$
Since $d\ge 1$ we have $H_{\md}\neq 0$.
By  \propref{lifting} there exist  elements  $H_L$, $L\in\mathbb N_+^d$ in $\Gamma$ such that $W_{\uz^L}= (I+ (\uz^L))^{\perp} =\langle H_L\rangle$  and  by Lemma \ref{cut}  $M=I ^{\perp} = \langle H_L, L\in\mathbb N_+^d\rangle$ satisfies  Definition \ref{G-modules} (1).

Since $W_{\uz^{L+\gamma_i}}=\langle H_{L+\gamma_i}\rangle$ we have
$(I+(\uz^{L+\gamma_i}))\circ H_{L+\gamma_i}=0$.
In particular $I\circ H_{L+\gamma_i}=z_j^{l_j}\circ H_{L+\gamma_i}=0$
for all $j\in\{1,\dots,d\}$ and $j\neq i$.
Hence we get
$$
\ann_R(H_L)\circ H_{L+\gamma_i}
=(I+(\uz^{L}))\circ  H_{L+\gamma_i}
=(z_i^{l_i})\circ  H_{L+\gamma_i}
=\langle H_{L-(l_i-1)\gamma_i}\rangle.
$$
It follows that  $M=\langle H_L, L\in\mathbb N_+^d\rangle$  is a $R$-submodule of $\Gamma$ which is $G_d$-admissible and we set  $\mathcal C(A)=M. $
Since $H_{\md}\neq 0$ we have that $M\neq 0$.

\medskip
Conversely, let $M\neq 0$ be a $R$-submodule of $\Gamma= k_{DP}[Z_1, \dots, Z_n]$ which is $G_d$-admissible. Hence  $M$ admits a system of  generators $\{H_L\}_{ L\in \mathbb N_+^d} $ satisfying  the following conditions
\begin{enumerate}
\item[(1)]  For all $L\in \mathbb N_+^d$ and for all $i=1,\dots,d$
$$
z_i \circ H_L =
\left\{
\begin{array}{ll}
 H_{L-\gamma_i}   & \text{ if }  L-\gamma_i >0   \ \\
0 & \text{ otherwise.}
\end{array}
\right.
$$
\item[(2)] $\ann_R(H_L)\circ H_{L+\gamma_i}=\langle H_{L-(l_i-1)\gamma_i}\rangle$ for all $i=1,\dots,d$ and $L\in \mathbb N_+^d$,
\end{enumerate}
\noindent
and $H_{\md }\neq 0$, Remark \ref{nonzero}.

For all $L\in \mathbb N_+^d$
we set  $W_L:=\langle H_L\rangle$ and $I_L:=\ann_R(W_L)$.
We define the following  ideal of $R$
$$I:=\bigcap_{L\in \mathbb N_+^d} I_L $$
and we prove that $R/I$ is Gorenstein of dimension $d.$

\bigskip
\noindent
{\bf Claim 1:} For all $L\in \mathbb N_+^d$  it holds
$I_L\subset I_{L+\md} + (\uz^L)$.

\noindent
{\bf Proof:}
Notice that it is enough to prove that $I_L\subset I_{L+\gamma_i} + (z_i^{l_i})$
for $i=1,\dots,d$.
In fact, assume that $I_L\subset I_{L+\gamma_i} + (z_i^{l_i})$
for all $i=1,\dots,d$.
Then
\begin{multline*}
I\subset I_{L+\gamma_1} + (z_1^{l_1})\subset I_{L+\gamma_1+\gamma_2} + (z_1^{l_1},z_2^{l_2})\subset
\dots \\
\dots
\subset
 I_{L+\gamma_1+\dots+\gamma_d} + (z_1^{l_1},\dots,z_d^{l_d})=I_L\subset I_{L+\md} + (\uz^L).
\end{multline*}

We  prove now that
$I_L\subset I_{L+\gamma_1} + (z_1^{l_1})$.
Given $\beta\in I_L=\ann_R(H_L)$ by $(2)$ there is $\gamma\in R$ such that
$$
\beta\circ H_{L+\gamma_1}=\gamma \circ H_{L-(l_1-1)\gamma_1}= \gamma \circ (z_1^{l_1} \circ H_{L+\gamma_1}).
$$
From this identity we deduce
$\beta- \gamma z_1^{l_1}\in \ann_R(H_{L+\gamma_1})=I_{L+\gamma_1}$,
so
$\beta\in I_{L+\gamma_1} + (z_1^{l_1})$.

\bigskip
\noindent
{\bf Claim 2:} For all $L\in \mathbb N_+^d$  it holds
$I_L= I + (\uz^L)$.

\noindent
{\bf Proof:}
By (1) we get
 $(\uz^L)\circ H_L=0$, hence  $(\uz^L)\subset I_L$.
 Since $I\subset I_L$ we get the inclusion
 $I+(\uz^L)\subset I_L$.

Now we prove that $I_L\subset I+(\uz^L)$.
 Given  $\beta_L\in I_L$, by {\bf Claim 1} there are $\beta_{L+\md}\in I_{L+\md}$ and
 $\omega^0 \in (\uz^L)$ such that
 $$
 \beta_L= \beta_{L+\md}+\omega^0.
 $$
 Since $\beta_{L+\md}\in I_{L+\md}$, by {\bf Claim 1} there are $\beta_{L+ {\bf{2}}_d }\in I_{L+{\bf{2}}_d}$ and
 $\omega^1 \in (\uz^{L+\md})$ such that
 $$
 \beta_{L+\md}= \beta_{L+{\bf{2}}_d}+\omega^1,
 $$
 so $\beta_L= \beta_{L+{\bf{2}}_d} +\omega^0+\omega^1.$
 By recurrence  there are sequences $\{\beta_{L+t \md}\}_{t\ge 0}$ and
 $\{\omega^t\}_{t\ge 0}$ such that $\beta_{L+{\bf{t}}_d}\in I_{L+{\bf{t}}_d}$,  $\omega^t\in (\uz^{L+ {\bf{t}}_d})$.
 For all
 $t\ge 0$ it holds
 $$
 \beta_L=\beta_{L+{\bf{t}}_d} + \sum_{i=0}^{t} \omega^t.
 $$

\noindent
Since $\omega^t\in (\uz^{L+{\bf{t}}_d})$ for all $t\ge 0$ we get that there exists
 $\overline{\omega}=\sum_{t\ge 0} \omega^t \in k[\![z_1,\cdots , z_n]\!]$.
 Hence there exists $\overline{\beta}=\lim_{t\rightarrow \infty} \beta_{L+{\bf{t}}_d} \in k[\![z_1,\cdots , z_n]\!]$, so
 $$
 \beta_L=\overline{\beta}+\overline{\omega}.
 $$
 Notice that $\overline{\omega}\in (\uz^L)$ and that
 $\overline{\beta}\in \bigcap_{t\ge 0} I_{L+{\bf{t}}_d}=I$.
 From this we get that $\beta_L\in I+(\uz^L)$.

 If $R= k[z_1,\cdots , z_n]$ then $\overline{\beta} \in I\subset R$.
 Since $\beta_L\in R$ we get that $\overline{\omega}=\sum_{t\ge 0} \omega^t \in R=k[z_1,\cdots , z_n]$.

 \bigskip
\noindent
{\bf Claim 3:} $\uz$ is a regular sequence of  $R/I $ and $\dim (R/I)= d$.

\noindent
{\bf Proof:}
Since $W_{\md}=\langle H_{\md}\rangle = I_{\md}^{\perp}$, by {\bf Claim 2}
$I_{\md}^{\perp}=(I+(\uz))^{\perp}$.
By Remark \ref{nonzero} we get  that $H_{\md}\neq 0$ since $M\neq 0$.
Hence
$R/I+(\uz)$ is Artinian and then  $\dim (R/I)\le d$.
Next we  prove that $\uz$ is a regular sequence modulo $I  $ and hence $\dim (R/I)= d$.

First we prove that $z_1$ is a non-zero divisor of $A=R/I$.
By $(1)$ the derivation by $z_1$ defines  an epimorphism  of $R$-modules
$$
W_L=\langle H_L\rangle \stackrel{z_1\circ}{\longrightarrow}
W_L=\langle H_{L-\gamma_1}\rangle
\longrightarrow
0
$$
for all $L\ge_d {\bf{2}}_d$.
This sequence induces an exact sequence of $R$-modules
$$
0
\longrightarrow
\frac{R}{I+(\uz^{L-\gamma_1})}
\stackrel{. z_1}{\longrightarrow}
\frac{R}{I+(\uz^{L})}.
$$
Let $a\in R$ such that $z_1 a\in I$.
Since $z_1 a\in I+ (\uz^{L})$ we deduce that
$ a\in I+ (\uz^{L-\gamma_1})$ for all $L\ge_d {\bf{2}}_d$,
and we conclude that $a\in I$.

\noindent
Assume that $z_1,\dots, z_r$, $r< d$, is a regular sequence of $R/I$.
Given  $L'=(l_{r+1},\dots, l_d)\in\mathbb N^{d-r}_+$ such that $l_i\ge 2$, for all $i=r+1,\dots,d$, we write $L=(1,\dots,1,l_{r+1},\dots, l_d)\in\mathbb N^{d}_+$.
By $(1)$ the derivation by $z_{r+1}$ defines  an epimorphism  of $R$-modules
$$
W_L=\langle H_L\rangle \stackrel{z_{r+1}\circ}{\longrightarrow}
W_L=\langle H_{L-\gamma_{r+1}}\rangle
\longrightarrow
0.
$$
This sequence induces an exact sequence of $R$-modules
$$
0
\longrightarrow
\frac{R}{I+(z_1,\dots,z_r)+(z_{r+1}^{l_r-1},\dots,z_d^{l_d})}
\stackrel{. z_{r+1}}{\longrightarrow}
\frac{R}{I+(z_1,\dots,z_r)+(z_{r+1}^{l_r},\dots,z_d^{l_d})}.
$$
Let $a\in R$ such that $z_{r+1} a\in I+(z_1,\dots,z_r)$.
Since $z_{r+1} a\in I+(z_1,\dots,z_r)+(z_{r+1}^{l_r},\dots,z_d^{l_d})$ we deduce that
$ a\in I+(z_1,\dots,z_r)+(z_{r+1}^{l_r-1},\dots,z_d^{l_d})$ for all $L'\ge_r {\bf{2}}_d$.
Hence $a\in I+(z_1,\dots,z_r), $ as wanted.

  \bigskip
\noindent
{\bf Claim 4:} $R/I $ is Gorenstein

\noindent
{\bf Proof:}
$(I+(\uz))^\perp$ is cyclic,  so $R/I+(\uz)$ is Gorenstein.
Since $R/I$ is Cohen-Macaulay by Claim 3,  we have that $R/I$ is a $d$-dimensional Gorenstein ring.

\bigskip

 We define
 $\mathcal C'(M)=R/I$ where $I=\bigcap_{L\in \mathbb N_+^d} I_L$ and we prove that $\mathcal C$ and $\mathcal C'$  are inverse each other.
 Let $A=R/I$ be
a $d$-dimensional Gorenstein rings $A=R/I$ such that $z_1,\dots,z_d$  is a regular sequence of $A$.
Then
$$
\mathcal C' \mathcal C (A)=R/J
$$
where
$$
J
=\bigcap_{L\in \mathbb N_+^d}  \ann_R((I+(\uz^L))^\perp)
=\bigcap_{L\in \mathbb N_+^d}  I+(\uz^L)=I,
$$
because $\ann_R((I+(\uz^L))^\perp)=I+(\uz^L)$.
Hence $\mathcal C' \mathcal C $ is the identity map in the set of rings satisfying  $(i)$.

Let $M=\langle H_L; L\in \mathbb N_+^d\rangle$  be an $R$-module satisfying  $(ii)$.
Then $ \mathcal C' (M)=R/I$ where
$I=\bigcap_{ L\in \mathbb N_+^d} \ann_R(H_L)$.
By {\bf Claim 2} we know $I+(\uz^L)=\ann_R(H_L)$ so
$\mathcal C \mathcal C' (M)=M.$
\end{proof}

\medskip

\begin{remark}
\label{grad}Theorem \ref{bijecGor} can be also applied to standard graded quotients $R/I$  of $R$ where $R$ is the polynomial ring (and $I$ is an homogeneous ideal). In this case the  dual $R$-submodule $M$    of $\Gamma $ will be generated by homogeneous DP-polynomials $H_L. $  Hence the correspondence will be between $d$-dimensional Gorenstein graded $k$-algebras  and $G_d$-admissible homogeneous $R$-submodules  $M$ of $\Gamma.  $  
\end{remark}

\begin{remark}
\label{details}
Let $A=R/I $ be a $d$-dimensional Gorenstein quotient of $R$  and let $\uz= z_1, \dots, z_d \in \m\setminus \m^2  $ be a linear regular sequence  for   $R/I. $ By Theorem \ref{bijecGor}, let $M=\langle H_L, L\in \mathbb N_+^d\rangle$ be   the dual module. Then by Macaulay's correspondence,  the  socle degree of the Artinian reduction $A/(\uz)A $ coincides with $\deg(H_{\md})$.
On the other hand we have the following inequality on the multiplicity of $A: $
$$
e_0(A)\le \length_R(A/(\uz)A)=\dim_{k}(\langle H_{\md}\rangle).
$$
The equality holds in the graded case or in the local case if $\uz$ is a superficial sequence  of $A. $   If $A$ is a standard graded $k$-algebra which is Gorenstein (hence Cohen-Macaulay), then the multiplicity and the Castelnuovo Mumford regularity of $A$ coincide with those of any Artinian reduction, in particular $A/(\uz)A.$ Hence $e(A) = \dim_{k}(\langle H_{\md}\rangle) $ and $\reg(A) = \deg H_{\md}.$
\end{remark}

\medskip
According to the previous remark, in the graded case important geometric information such as the multiplicity, the arithmetic  genus, or more in general,  the Hilbert polynomial of the Gorenstein $k$-algebra depend only on the choice of $H_{\md}, $ the first step in the construction of a  $G_d$-admissible dual module, see Example \ref{fot}.
We can state the following result which refines Theorem \ref{bijecGor} in the case of graded $k$-algebras.

\begin{theorem}
\label{gradedGor}
 Let $R $ be the polynomial ring and let $d\le \dim R$ be a positive integer. There is a one-to-one correspondence $\mathcal C$ between the following sets:

\noindent
(i) $d$-dimensional Gorenstein standard graded $k$-algebras  $A=R/I$  of multiplicity $e=e(A) $ (resp. Castelnuovo-Mumford regularity $r= \reg(A)$)

\noindent
(ii) non-zero $G_d$-admissible homogeneous $R$-submodules $M=\langle H_L, L\in \mathbb N_+^d\rangle$ of $\Gamma$  such that $\dim_{k} \langle H_{\md}\rangle = e$ (resp. $\deg H_{\md} =r$)
\end{theorem}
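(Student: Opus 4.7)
The plan is to bootstrap directly from \thmref{bijecGor}. First I would invoke \remref{grad} to restrict the bijection $\mathcal{C}$ of \thmref{bijecGor} to its graded version: homogeneous ideals $I\subset R$ with $R/I$ a $d$-dimensional Gorenstein standard graded $k$-algebra correspond one-to-one with non-zero $G_d$-admissible homogeneous $R$-submodules $M = \langle H_L, L\in \mathbb N_+^d\rangle$ of $\Gamma$. To see the homogeneity is preserved along $\mathcal{C}$, note that in the construction of $H_L$ via \propref{lifting} one can always choose the primitives in \remref{integral} inside $\Gamma$ to be homogeneous, since each $W_{\uz^L}=(I+(\uz^L))^\perp$ is graded whenever $I$ is homogeneous and $\uz\subset R_1$.

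Once this graded restriction of $\mathcal{C}$ is in place, the refinement by multiplicity (resp. regularity) is immediate from \remref{details}. The key input is that, because $k$ is infinite, a linear regular sequence $\uz=z_1,\dots,z_d\in R_1$ can be chosen for $R/I$, and such a sequence is automatically superficial in the graded setting. Therefore
\[
e(A)=\length_R(A/(\uz)A), \qquad \reg(A)=\reg(A/(\uz)A),
\]
and Macaulay duality applied to the Artinian reduction $A/(\uz)A$ gives
\[
e(A)=\dim_{k}\langle H_{\md}\rangle, \qquad \reg(A)=\deg H_{\md}.
\]
This shows that the forward direction $\mathcal{C}$ sends an $A$ with $e(A)=e$ (resp. $\reg(A)=r$) to an $M$ with $\dim_{k}\langle H_{\md}\rangle=e$ (resp. $\deg H_{\md}=r$).

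For the reverse direction, given a homogeneous $G_d$-admissible $M$ satisfying the numerical condition, $\mathcal{C}^{-1}(M)=R/I$ is Gorenstein and $d$-dimensional by \thmref{bijecGor}, and \emph{Claim 3} in its proof shows that $\uz$ is a regular linear sequence for $R/I$. The Artinian reduction $R/(I+(\uz))$ has inverse system $\langle H_{\md}\rangle$ by \emph{Claim 2}, so the same two identities of \remref{details} transfer $\dim_{k}\langle H_{\md}\rangle$ and $\deg H_{\md}$ back to $e(A)$ and $\reg(A)$ respectively. There is no genuine obstacle here beyond what is already inside \thmref{bijecGor}; the only step that merits explicit care is the graded lifting mentioned above, so that $\mathcal{C}$ really restricts to a bijection at the level of homogeneous modules before the numerical refinement is appended.
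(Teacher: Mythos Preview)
Your proposal is correct and follows essentially the same route as the paper: the authors present \thmref{gradedGor} without a separate proof, as a direct refinement of \thmref{bijecGor} via \remref{grad} (for the graded restriction of $\mathcal C$) and \remref{details} (for the identification $e(A)=\dim_k\langle H_{\md}\rangle$ and $\reg(A)=\deg H_{\md}$). Your explicit check that homogeneous primitives can be chosen in \propref{lifting} is a welcome clarification of what \remref{grad} asserts, but otherwise the argument is the intended one.
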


\medskip
The following remarks will be useful in the effective construction of a $G_d$-admissible $R$-submodule of $\Gamma. $  

\begin{remark}
\label{intersection} Condition (2) in Definition \ref{G-modules} can be replaced by the following:

\vskip 2mm
(2) {  $\langle H_L \rangle \cap k[Z_1, \dots, {\widehat {Z_i } }, \dots, Z_n] \subseteq \langle  H_{L-(l_i-1)\gamma_i} \rangle$ for all $L\in \mathbb N_+^d$ and for all $i=1,\dots,d. $}

\vskip 2mm \noindent {\bf Proof:} Let  $\{H_L\}_{L\in \mathbb N_{+}^d}$ be in $\Gamma$ satisfying conditions (1) and (2) of Definition \ref{G-modules}, we prove condition (2) above. Let $\beta_i \in   \langle H_L \rangle \cap  k[Z_1, \dots, {\widehat {Z_i } }, \dots, Z_n],$ then there exists $\alpha_i \in R, $ such that $\beta_i = \alpha_i \circ H_L \in   k[Z_1, \dots, {\widehat {Z_i } }, \dots, Z_n] $ and hence $z_i \circ \beta_i =0.$ Now $\alpha_i \circ H_{L-\gamma_i}=\alpha_i \circ (z_i \circ H_L) = z_i \circ (\alpha_i \circ H_L)= z_i \circ \beta_i =0, $ hence $\alpha_i \in \ann (H_{L-\gamma_i}) $ and by assumption $ \beta_i \in \langle  H_{L-(l_i-1)\gamma_i} \rangle. $

Conversely assume (2) above. Let $\alpha_i \in \ann (H_{L-\gamma_i})$  and we prove $\alpha_i \circ H_L \subseteq \langle H_{L-(l_i-1)\gamma_i}\rangle.$ We have $z_i \circ (\alpha_i \circ H_L) = \alpha_i \circ (z_i \circ H_L) = \alpha_i \circ H_{L-\gamma_i} =0, $ hence $ \alpha_i \circ H_L \in  k[Z_1, \dots, {\widehat {Z_i } }, \dots, Z_n].$ It follows $\alpha_i \circ H_L \in  \langle H_L \rangle \cap  k[Z_1, \dots, {\widehat {Z_i } }, \dots, Z_n]  \subseteq \langle H_{L-(l_i-1)\gamma_i} \rangle, $ as required.

\end{remark}
\vskip 2mm

\begin{remark}
\label{simplify}
It is easy to see that   condition $(2)$ of  Definition \ref{G-modules}  is equivalent to
$$
H_{L+\gamma_i}\in (\ann_R( H_{L-(l_i-1)\gamma_i} ) \ann_R(H_L ))^{\perp}.
$$
It is worth mentioning that any $G_d$-admissible set $\{H_L\}_{L\in \mathbb N_{+}^d}$  with $M=\langle H_{L}; L\in \mathbb N_{+}^d\rangle$, satisfies

\noindent
$(i)$
$M=\langle H_{{\bf{t}}_d}; t\ge n_0\rangle$
for any integer $n_0\ge 1$,

\noindent
$(ii)$
if  $L\in \mathbb N_+^d, $ then
$H_{L+\md}\in (\ann_R( H_{\md} )\ann_R( H_L )^d)^{\perp}$.

\noindent
{\bf Proof:}
$(i)$
Notice that for all $L\in \mathbb N_+^d$ there is $t\in \mathbb N$ such that
$L\le_d {\bf{t}}_d$,
so
$H_L$ is determined by $H_{{\bf{t}}_d}$ because $H_L=\uz^{{\bf{t}}_d-L}\circ H_{{\bf{t}}_d}$, see \thmref{bijecGor}.

\noindent
$(ii)$
Since $L+\md=(L+\md-\gamma_1)+\gamma_1$,  from \thmref{bijecGor} $(ii)$ we get
that
$$
\ann_R(H_L)\circ H_{L+\md}=\langle H_{L+\md-l_1 \gamma_1}\rangle.
$$
Since $\sum_{i=1}^d l_i \gamma_i=L$, by recurrence we deduce that
$$
\ann_R(H_L)^d\circ H_{L+\md}=\langle H_{L+\md- L}\rangle=\langle H_{\md}\rangle.
$$
From this identity we get $(ii)$.
\end{remark}

\begin{remark}
\label{shape}
Let $M= \langle H_L; L\in \mathbb N_+^d \rangle $ be a non-zero $R$-submodule of $\Gamma $ which is $G_d$-admissible with respect to a linear regular sequence $z_1, \dots, z_d \in R$.
 Let   $Z_1, \dots, Z_d  $ the corresponding dual elements in $\Gamma_1.$
 As a  consequence of  Definition \ref{G-modules},  for $t\ge 1  $  and in accordance  with Remark \ref{integral}, we can    write
$$ H_{({\bf{t+1}})_d}= Z_1\cdots Z_d  H_{{\bf{t}}_d} + C_{t+1}=\sum_{i=0}^{t} Z_1^i \cdots Z_d^i \, C_{t+1-i}$$ where
$(z_1\dots z_d)\circ C_i=0$  for all $i=1, \dots, t+1. $
Notice that  by the above remark the diagonal elements $ H_{({\bf{t+1}})_d} $ can describe the module $M=\langle H_L, L\in {\mathbb N}^d_+ \rangle.$
\end{remark}

\section{Examples and effective constructions}

Many interesting questions arise from Theorem \ref{bijecGor}, but   the most challenging aim is to deepen the  effective aspects.
 We are interested in  the construction   of $R$-submodules $M$ of $\Gamma$ which are $G_d$-admissible and to their corresponding  Gorenstein $k$-algebras.  We hope that the following examples can suggest interesting refinements of our main result.  
 
 \medskip
 The following is a first  (trivial) example.

\medskip
If $J$ is an ideal in $k[\! [z_{d+1},\dots,z_n]\!], $   we say that an ideal $I\subset R=k[\! [z_{1},\dots,z_d, \dots, z_n]\!] $ is a cone with respect to  $J$  if $I=JR.$
In the following result denote $S= k[\! [z_{d+1},\dots,z_n]\!] $ and let $Q= k_{DP}[Z_{d+1}, \dots, Z_n]  $ be the corresponding dual.

\begin{proposition} \label{cone}  Given $H \in  Q, $   consider  the following $R$-submodule   of $ \Gamma=k_{DP}[z_{1},\dots, z_n]$
$$M=  \langle H_L=Z_1^{l_1-1}\dots Z_d^{l_d-1} H \mid    L=(l_1,\dots,l_d)\in \mathbb N^d_+ \rangle.$$
 Then  $R/\ann_R(M) $ is a $d$-dimensional Gorenstein graded $k$-algebra  and $\ann_R(M) $ is a cone with respect to  $J =\ann_{S} (H). $
\end{proposition}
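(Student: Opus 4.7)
The plan is to apply \thmref{bijecGor}: I would first verify that the family $\{H_L\}_{L \in \mathbb{N}_+^d}$ is $G_d$-admissible in the sense of \defref{G-modules} (so that $R/\ann_R(M)$ is automatically a $d$-dimensional Gorenstein ring), and then compute $\ann_R(M)$ explicitly to identify it with the cone $JR$.

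Condition~(1) of \defref{G-modules} is essentially a direct computation. Since $H \in Q = k_{DP}[Z_{d+1}, \ldots, Z_n]$ is annihilated by $z_1, \ldots, z_d$, contracting $H_L = Z_1^{l_1-1} \cdots Z_d^{l_d-1} H$ by $z_i$ with $i \le d$ merely decrements the exponent of $Z_i$ in the prefactor, producing $H_{L - \gamma_i}$ when $l_i \ge 2$ and $0$ when $l_i = 1$.

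For Condition~(2), the key is to first establish
$$\ann_R(H_L) \;=\; (z_1^{l_1}, \ldots, z_d^{l_d}) + JR.$$
The inclusion $\supseteq$ is clear. For $\subseteq$, I would write any $\alpha \in R$ uniquely modulo $(z_1^{l_1}, \ldots, z_d^{l_d})$ as $\sum_{0 \le m_i < l_i} z_1^{m_1} \cdots z_d^{m_d}\,\gamma_m$ with $\gamma_m \in S$, and observe that contracting against $H_L$ produces DP-monomials whose $(Z_1, \ldots, Z_d)$-multi-exponents are pairwise distinct and hence linearly independent. This forces $\gamma_m \circ H = 0$ for every $m$, i.e.\ $\gamma_m \in J$. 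With this formula, Condition~(2) is immediate: the summand $JR$ annihilates $H_{L+\gamma_i}$ (since $J$ annihilates $H$), and $z_j^{l_j} \circ H_{L+\gamma_i} = 0$ for $j \ne i$ (because $Z_j$ appears to exponent only $l_j - 1$), so only the summand $(z_i^{l_i}) \circ H_{L+\gamma_i} = R \cdot H_{L - (l_i-1)\gamma_i} = \langle H_{L - (l_i-1)\gamma_i}\rangle$ contributes.

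Since $H_{\md} = H \ne 0$, $M$ is nonzero by \remref{nonzero}, and \thmref{bijecGor} yields that $R/\ann_R(M)$ is $d$-dimensional Gorenstein. To identify $\ann_R(M) = \bigcap_L \ann_R(H_L)$ with $JR$, the inclusion $JR \subseteq \ann_R(M)$ is clear. In the graded setting, a homogeneous element $f$ of degree $e$ lying in the intersection must lie in $JR$: take $L$ with $\min_i l_i > e$, so that $(z_1^{l_1}, \ldots, z_d^{l_d})$ contains no nonzero forms of degree $\le e$. I expect the main technical wrinkle to be the analogous statement in the local setting, where one uses that $J$ (and hence $JR$) is finitely generated and therefore $\m$-adically closed, combined with $(z_1^t, \ldots, z_d^t) \subseteq \m^t$, to conclude $\bigcap_L [JR + (z_1^{l_1}, \ldots, z_d^{l_d})] = JR$.
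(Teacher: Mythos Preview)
Your proposal is correct and follows essentially the same route as the paper: verify conditions~(1) and~(2) of \defref{G-modules} via the formula $\ann_R(H_L)=(z_1^{l_1},\dots,z_d^{l_d})+JR$, invoke \thmref{bijecGor}, and then compute $\ann_R(M)=\bigcap_L\ann_R(H_L)=JR$. You supply more detail than the paper in two places (the proof of the annihilator formula and the justification of the intersection), but the overall argument is the same.
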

\begin{proof} We prove that $M$ is $G_d$-admissible proving that $M$ satisfies  Definition \ref{G-modules} with respect  to the sequence $z_1, \dots, z_d. $
It is easy to show that $M$ satisfies condition (1)   setting  $H_{\md}=H. $
We prove now that $M$    satisfies also condition (2) of  Definition \ref{G-modules}, that is $\ann_R( H_L ) \circ H_{L+\gamma_i}\subseteq \langle H_{L-(l_i-1)\gamma_i}\rangle$ for all $i=1,\dots,d$ and $L\in \mathbb N_+^d$.
First of all we observe that, since  $H\in k[Z_{d+1},\dots,Z_n], $  it is easy to prove that
$$
\ann_R\langle Z_1^{l_1 }\dots Z_d^{l_d } H\rangle=
\ann_R\langle Z_1^{l_1  }\dots Z_d^{l_d } \rangle+ \ann_S (H)R = (z_1^{l_1 }, \dots , z_d^{l_d }) + \ann_S (H)R
$$
Hence
\begin{multline*}
  \ann_R(H_L )\circ H_{L+\gamma_i} = ((z_1^{l_1 }, \dots , z_d^{l_d }) + \ann_S (H)R) \circ H_{L+\gamma_i}=   \\
  =(z_1^{l_1 }, \dots , z_d^{l_d }) \circ H_{L+\gamma_i}  \  \subseteq \langle H_{L-(l_i-1)\gamma_i}\rangle.
  \end{multline*}
 From \thmref{bijecGor} we know that there exists a  $d$-dimensional Gorenstein local ring $A=R/I$ such that
 $I^\perp= \langle  H_L; L\in\mathbb N_+^d\rangle$ and
 $ I= \ann_R(M) = \bigcap_{L\in \mathbb N_+^d}\ann_R(H_L).$
Hence
$$
I =  \bigcap_{L\in \mathbb N_+^d} ((z_1^{l_1 }, \dots , z_d^{l_d }) + \ann_S (H)R) = \ann_S (H)R
$$
is a cone with respect to  $J =\ann_{S} (H). $
\end{proof}

\medskip

 We are interested now in more significant classes of Gorenstein $d$-dimensional $k$-algebras.     The dual module of a Gorenstein ring of positive dimension is not finitely generated and  we would be interested in effective methods determining  Gorenstein rings in a finite numbers of steps.    This aim motives the following setting.

 \medskip
Let $t_0 \in \mathbb N_+, $  we say that  a  family   $\mathcal H= \{H_L;L\in \mathbb N_+^d, |L|\le t_0\}$  of polynomials of $\Gamma $ is {\it{admissible }} if the elements  $H_L$ satisfy  conditions (1) and (2) of Definition \ref{G-modules} up to $L$ such that $|L|\le t_0.$ 



We recall that, starting from a polynomial $H_{\md}, $ Remark \ref{intersection} and Remark \ref{simplify} give  inductive procedures for constructing an admissible set
$\mathcal H=\{H_L;L\in \mathbb N_+^d, |L|\le t_0\}$ with respect to a sequence of linear elements $\uz=z_1,\cdots,z_d. $

\begin{proposition} \label{finite}
Let  $\mathcal H=\{H_L;L\in \mathbb N_+^d, |L|\le t_0\}$ be an admissible set of homogenous polynomials with respect to a
linear sequence $\uz= z_1,\cdots,z_d$ with $t_0\ge (r+2)d$ where  $r = \deg H_{\md} $.
Assume there exists a graded Gorenstein   $k$-algebra  $A=R/ I$ such that
 $(I+ (\uz^L))^{\perp} =\langle H_L\rangle$ for all $|L|\le t_0$.
Then $$ I= \ann_R(H_{({\bf{r+2}})_d})_{\le r+1}R.$$
\end{proposition}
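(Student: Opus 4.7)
The plan is to translate the hypothesis into an ideal-theoretic identity via Matlis duality, then exploit the degree gap created by the choice $L=({\bf r+2})_d$ to truncate. The assumption $t_0 \ge (r+2)d$ is there precisely so that $|({\bf r+2})_d| = (r+2)d \le t_0$, meaning the polynomial $H_{({\bf r+2})_d}$ is among those supplied by the admissible family $\mathcal H$.

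First I would apply Matlis duality to the hypothesis $(I+(\uz^{({\bf r+2})_d}))^\perp = \langle H_{({\bf r+2})_d}\rangle$ to obtain the key identity
$$\ann_R(H_{({\bf r+2})_d}) = I + (z_1^{r+2},\dots,z_d^{r+2}).$$
Next I would truncate both sides in degrees $\le r+1$. Because each generator $z_i^{r+2}$ is homogeneous of degree $r+2$ and the ideal $I$ is homogeneous, the component $(\uz^{({\bf r+2})_d})_{\le r+1}$ vanishes, yielding
$$\ann_R(H_{({\bf r+2})_d})_{\le r+1} = I_{\le r+1}.$$

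The substantive step, which I expect to be the main point rather than an obstacle, is to prove $I = I_{\le r+1}R$, i.e.\ that $I$ is generated in degrees at most $r+1$. For this I would invoke \remref{details} to identify $\reg(A) = \deg H_{\md} = r$, and then use the standard fact that for any homogeneous ideal $\reg(I) = \reg(R/I)+1 = r+1$, so that $I$ is $(r+1)$-regular and hence generated in degrees $\le r+1$. Combining this with the truncation identity gives
$$I \;=\; I_{\le r+1}\,R \;=\; \ann_R(H_{({\bf r+2})_d})_{\le r+1}\,R,$$
which is the conclusion.

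The only mildly delicate point is the regularity bound on the generators of $I$; everything else is a clean dualize-and-truncate argument in which the exponent $r+2$ is chosen precisely one unit above the cutoff $r+1$ so that the parameter ideal $(\uz^{({\bf r+2})_d})$ becomes invisible after truncation.
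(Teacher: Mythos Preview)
Your proposal is correct and follows essentially the same route as the paper: both arguments hinge on the identity $\ann_R(H_{({\bf r+2})_d}) = I + (\uz^{({\bf r+2})_d})$ obtained by duality, together with the fact that $\reg(A)=\deg H_{\md}=r$ forces $I$ to be generated in degrees at most $r+1$, so truncating in degrees $\le r+1$ kills the parameter ideal and recovers $I$. Your write-up is in fact slightly more explicit about the truncation step than the paper's own proof.
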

\begin{proof}
It well known that  the Castelnuovo-Mumford $\reg(A)$
 regularity of the Cohen-Macaulay ring $A=R/I$ coincides with regularity of the Artinian reduction
 $R/\ann_R( H_{\md})$, and hence with its  socle degree.
Hence we get that
 $$
 \reg(A)=s(R/\ann_R(H_{\md}))=\deg H_{\md}=r
 $$
Now   the maximum degree of a minimal system of generators of $I$ is at most $\reg(A) +1$.
 From the identity  $\ann_R(H_{({\bf{r+2}})_d})=I+(\uz^{r+2})$
we get the claim.
\end{proof}

\begin{remark}
\label{degree}
  Notice that the last result provide a test for checking if an admissible set of polynomials can be lifted to a full
  $G_d$-admissible family in the graded case.
  We only need to check if the ideal
  $ I= \ann_R(H_{({\bf{r+2}})_d})_{\le r+1}R$
  is Gorenstein.
  
  It is worth mentioning that if we know the maximum degree  of the generators of $I$ (which coincides with those of the $\ann_R(H_{\bf 1_d}$), say $b, $ then
  $$ I= \ann_R(H_{({\bf{b+1}})_d})_{\le b}R$$ 
\end{remark}

 \medskip
\noindent We  show  now some effective constructions of  Gorenstein   $k$-algebras by using the  above results and remarks. 
In the following example we construct a  $1$-dimensional Gorenstein graded $k$-algebra of codimension two.

 \medskip
 \begin{example}
   Consider $R=k[x,y,z] $ and $ H_1= Y^{[3]}-Z^{[3]} \in \Gamma=k_{DP}[X,Y,Z].$  Notice that $$\ann_R(H_1) = (x,yz, y^{ 3}+z^{3}) $$

\noindent
Our aim is to construct an ideal  $I \subseteq R=k[x,y,z] $ such that $A=R/I$ is Gorenstein and $R/I+(x) = R/\ann_R(H_1). $   We have $r=\deg H_1= 3, $ but from the above  equality we deduce the maximum degree of the generators of the ideal $I $   is three. 
Hence, by Proposition \ref{finite} and Remark \ref{degree},  a $1$-dimensional lift  of the Artinian reduction $R/\ann_R(H_1) $ it is univocally determined by $H_4$ in an admissible set $  \mathcal H= \{ H_1, \dots, H_4\}. $  In particular
$$I= \ann_R(H_4)_{\le 3} R.
$$
By using Singular  one can verify that the following set is admissible:

\noindent
$\mathcal H= \{ H_1, H_2 =XH_1  +YZ^{[3]},  H_3 = XH_2   -Y^{[2]}Z^{[3]}, H_4 = XH_3 +Y^{[3]}Z^{[3]}  -4Z^{[6]} \}  $
and hence
  $$ I= (yz+xz, y^3 +z^3 -xy^2+x^2y -x^3). $$
Notice that $A=R/I$ is a one-dimensional Gorenstein ring of multiplicity $6$, $x$ is a linear regular  element of $A$ and $R/\ann_R(H_1)$ is an Artinian reduction of $A$.
\end{example}

In the following example we construct a  two-dimensional Gorenstein $k$-algebra of codimension three.

\medskip
\begin{example} 
\label{fot}
 
Consider $R=k[x,y,z,t,w] $ and $$ H_{1,1}= X^{[2]}+Y^{[2]}+XZ \in \Gamma=k_{DP}[X,Y,Z,T,W].$$
Our aim is to construct an ideal  $I \subseteq R=k[x,y,z, t, w] $ such $A=R/I$ is Gorenstein and $R/I+(t,w) = R/\ann_R(H_{1,1}). $ We have $r=\deg H_{1,1}=2 .$
By Proposition \ref{finite},   a $2$-dimensional lift  of the Artinian reduction $B= R/\ann_R(H_{1,1}) $ it is univocally determined by $H_{4,4}$ in an admissible set $  \mathcal H= \{ H_{1,1}, \dots, H_{4,4}\}. $  In particular
$I= \ann_R(H_{4,4})_{\le 3} R.$

Since the Hilbert function of $B$ is $\{1,3,1\}, $ by \cite[Theorem B]{Sch80} we know that $B$ is the quotient of $k[x,y,z]$ by an ideal minimally generated
by  $5$ forms of degree two.
Hence $I$ is minimally generated by $5$ forms of degree two and then
$$
I= \ann_R(H_{4,4})_{\le 2} R=\ann_R(H_{3,3})_{\le 2} R.
$$

By using Singular  one can verify that the following collection of polynomials  forms an admissible set:

{\tiny

\medskip
\noindent
$ H_{1,1}= X^{[2]}+Y^{[2]}+XZ$,

\medskip
\noindent
$H_{2,2}=2X^{[4]}+XY^{[3]}+X^{[2]}YZ+2Z^{[4]}-X^{[3]}T+Y^{[2]}ZT+XZ^{[2]}T+X^{[3]}W-X^{[2]}YW-Z^{[3]}W-TW H_{1,1}$,

\medskip
\noindent
$H_{3,3}=X^{[5]}Y+X^{[2]}Y^{[4]}-X^{[5]}Z+X^{[3]}Y^{[2]}Z+Y^{[5]}Z+X^{[4]}Z^{[2]}+XY^{[3]}Z^{[2]}+Y^{[4]}Z^{[2]}+X^{[2]}YZ^{[3]}+3Y^{[3]}Z^{[3]}-X^{[2]}Z^{[4]}+3XYZ^{[4]}-3Z^{[6]}
-3X^{[5]}T+Y^{[5]}T+XY^{[3]}ZT+X^{[2]}YZ^{[2]}T+3Z^{[5]}T
+X^{[4]}T^{[2]}+Y^{[2]}Z^{[2]}T^{[2]}+XZ^{[3]}T^{[2]}-X^{[5]}W+X^{[4]}YW-X^{[3]}Y^2W-Y^{[5]}W-2X^{[4]}ZW-XY^{[3]}ZW-Y^{[4]}ZW-X^{[2]}YZ^{[2]}W-2Y^{[3]}Z^{[2]}W+X^{[2]}Z^{[3]}W-2XYZ^{[3]}W+2Z^{[5]}W+X^{[3]}YW^{[2]}+XY^{[3]}W^{[2]}+Y^{[4]}W^{[4]}-X^{[3]}ZW^{[2]}+X^{[2]}YZW^{[2]}+Y^{[3]}ZW^{[2]}-X^{[2]}Z^{[2]}W^{[2]}+XYZ^{[2]}W^{[2]}-Z^{[4]}W^{[2]}-TW H_{2,2},$}

\noindent
We are ready now to compute $ \ann_R(H_{3,3})_{\le 2} R, $ hence
  $$ I= (z^2-xt+zt+zw+tw,yz-t^2+yw,-y^2+xz+t^2,-xy+zt+t^2,x^2-xz-yt+zt-xw+tw). $$
Notice that $A=R/I$ is a two-dimensional Gorenstein ring of multiplicity $5$, $\{t,w\}$ is a linear regular sequence  of $A$ and $B$ is an Artinian reduction of $A$.
The projective scheme $C$  defined by $A$ is a non-singular arithmetically Gorenstein elliptic curve of $\mathbb P_k^4$.
Notice that the above generators of $I$ are the Pfaffians of the skew matrix
$$
\left(
  \begin{array}{ccccc}
    0&-x+t&-t&x&-y\\
  x-t&0&x&-y&z+t\\
  t&-x&0&z+w&0\\
  -x&y&-z-w&0&-t\\
  y&-z-t&0&t&0\\
      \end{array}
\right)
$$
Since the Hilbert function of $B$ is $\{1,3,1\}$
we get
that the arithmetic genus of $C$, that coincides with its geometric genus,  is $e_1(B)-e_0(B)+1=5-5+1=1$,
where $e_0(B), e_1(B)$ are the   Hilbert coefficients of   $B$, see \cite{RV-Book}.

\end{example}

\medskip

In the following example we construct a  $1$-dimensional Gorenstein $k$-algebra of codimension four.

\medskip
\begin{example} 
\label{fot2}
Consider $R=k[x,y,z,t,v] $ and $ H_{1}= X^{[2]}+Y^{[2]}+Z^{[2]}+T^{[2]} \in \Gamma=k_{DP}[X,Y,Z,T,V].$
We have $r=\deg H_{1}=2 .$
Hence a $1$-dimensional lift  of the Artinian reduction $R/\ann_R(H_{1}) $ it is univocally determined by $H_{4}$ in an admissible set $  \mathcal H= \{ H_{1}, \dots, H_{4}\}. $
In particular
$$I= \ann_R(H_{4})_{\le 3} R.
$$
One can verify that the following collection of polynomials  forms an admissible set:

\medskip
\noindent
{\tiny
$H_{1}= X^{[2]}+Y^{[2]}+Z^{[2]}+T^{[2]}$,

\medskip
\noindent
$H_{2}=V H_1 + X^{[3]}+Z^{[3]}$,

\medskip
\noindent
$H_{3}=V H_2 + X^{[4]}+Z^{[4]}$,

\medskip
\noindent
$H_{4}=V H_3+ X^{[5]}+Z^{[5]}$.}

\noindent

\bigskip
Hence
  $$ I= (x^2-z^2-xv+zv, xy,y^2-z^2+zv,xz,yz,z^2-t^2-zv,xt,yt,zt). $$
Notice that $A=R/I$ is a one-dimensional Gorenstein ring of multiplicity $6$, $v$ is a linear regular  element  of $A$ and $R/\ann_R(H_{1})$ is a minimal reduction of $A$ with Hilbert function $\{1,4,1\}$.
In this case $A$ is non-reduced and a minimal prime decomposition is
$$
I=(x,y,z-v,t)\cap (x-v, y, z, t)\cap (t^3,t^2+zv,zt,z^2,yt,yz,y^2+zv,x-z)
$$
with minimal primes
$$
(x,y,z-v,t),  (x-v, y, z, t), (x,y,z,t);
$$
the minimal graded $R$-free resolution of $A$ is
$$
0
\longrightarrow
R(-6)
\longrightarrow
R(-4)^9
\longrightarrow
R(-3)^{16}
\longrightarrow
R(-2)^9
\longrightarrow
R
\longrightarrow
A=R/I
\longrightarrow
0.
$$

\end{example}

\medskip
We present now examples in the local (non-homogeneous) case.  Possible obstacles to a finite procedure could come in particular when $R/I$ is not algebraic.       The ring  $A=R/I$ is algebraic if there is an ideal $J\subset T=k[z_1,\dots,z_n]_{(z_1,\dots,z_n)} $
such that $A$ is analytically isomorphic to $R/JR$, completion of $T/J$ with respect to
the $(z_1,\dots,z_n)$-adic topology.  If the singularity defined by $A=R/I$ is isolated,  then $A$ is algebraic.
It was proved by Samuel for hypersurfaces, \cite{Sam56}, and,  in general, by Artin in
\cite[Theorem 3.8]{Art69b}. But there are singularities of normal surfaces in $\mathbb C^3$ which  are not algebraic, \cite[Section 14, Example 14.2]{Whi65a}.
Notice that the ideal defining the above singularity is principal, in particular is Gorenstein.

\medskip
The following example suggests that in the quasi-homogeneous case,   Proposition \ref{finite} could  be still true.

\begin{example}
Consider $R=k[[x,y,z]] $ and we construct  a non-homogeneous ideal $I$ in $R$ such that $R/I $ is Gorenstein of dimension $1 $ and multiplicity $5.$  By Theorem \ref{bijecGor} we should  exhibit  a $R$-submodule $M$ of $\Gamma=k_{DP}[X,Y,Z] $ which is $G_1$-admissible.  Remark \ref{details} suggests to consider a polynomial $H_1$ such that $\dim_k <H_1>=5. $ Let $H_1= Z^{[2]} +Y^{[3]} $ (non-homogeneous, but quasi-homogeneous).  In this case $\deg H_1 =3 $ and $H_1$    is a quasi-homogeneous polynomial.  One can verify that $$ \mathcal H=\{ H_1, H_2=XH_1,  H_3=X^{2}H_1, H_4= X^{[3]}H_1+ Y^{[4]} Z +Y Z^{[3]} , H_5=X H_4\}  $$
 is  an admissible set. In this case is still true that $$ I =\ann_R(H_5)_{\le 4}R =(yz-x^3, z^2-y^3). $$
 Notice that the ideal $I$ is  the defining ideal in $R$ of the semigroup ring $k[[t^5,t^6,t^9]].$  In particular $A= R/I $ is a domain.   \end{example}

 \vskip 2mm
 The above example suggests the interesting problem   to characterize   the generators of the dual module in Theorem \ref{bijecGor} of a  Gorenstein domain.
 \vskip 2mm
 Next example shows an example where $A$ is a local Gorenstein $k$-algebra, but the corresponding associated graded ring is not longer Gorenstein.
 \vskip 2mm

 \begin{example}
 Consider $R=k[[x,y,z,t,u,v]] $ and we construct an  ideal $I$ in $R$ such that $R/I $ is Gorenstein of dimension $2.$ By Theorem \ref{bijecGor} we exhibit  a $R$-submodule $M$ of $\Gamma=k_{DP}[X,Y,Z,T,U,V] $ which is $G_2$-admissible.  Let $H=Z^{[5]}+T^{[4]}+U^{[3]}+W^{[3]}+ZTUV.    $ One can verify that $$ M=\langle H, F=XYH+U^{[2]}T-WTZ,  X^{i}Y^{j}F, \ i,j \in \mathbb N \rangle $$
 is $G_2$-admissible. Then $$ \ann_R(M) = (z^4-tuw, t^2w,z^2w,t^2u, t^3-zuw, zt^2, z^2t, w^2-ztu, u^2-tu^2-ztw-xyzuw). $$ In particular   $A= R/\ann_R(M) $ is a Gorenstein local ring of dimension $2 $  and of codimension $4.$ Notice that $gr_{\maxn}(A) $ is not Gorenstein because the second difference of the Hilbert function (computed by using Proposition \ref{hf})  is not symmetric.
 \end{example}

We end  this paper with an  example showing that \propref{finite} cannot be extended to the local case without a suitable modification.

\begin{example}
For all $n\ge 2$ we consider the one-dimensional local ring $A_n=k[\![x,y]\!]/(f_n)$ with
$f_n=y^2-x^n. $ Notice that, for  all $n\ge 1,$  $A_n$ is algebraic and  Gorenstein of multiplicity $e(A_n)=2.$
 
On the other hand
$A_n/(x)=k[y]/(y^2) $ is an Artinian reduction of $A_n$, so $H_1=Y$ and hence $\deg H_1=1.$
If $n \ge 4, $  we cannot recover the ideal $(f_n)$ after $\deg H_1 +2 $ steps as \propref{finite} could suggest. 
\end{example}


\providecommand{\bysame}{\leavevmode\hbox to3em{\hrulefill}\thinspace}
\providecommand{\MR}{\relax\ifhmode\unskip\space\fi MR }
\providecommand{\MRhref}[2]{%
  \href{http://www.ams.org/mathscinet-getitem?mr=#1}{#2}
}
\providecommand{\href}[2]{#2}

\end{document}